\documentclass[12pt]{amsart}
\usepackage{amsmath,amssymb}

\usepackage[margin=2.0cm]{geometry}
\usepackage{hyperref}
\usepackage{multirow}

\usepackage{color}
\usepackage{enumerate}
\usepackage{tikz,lmodern}
\usepackage{amsfonts}
\usetikzlibrary{arrows, decorations.markings}

\usepackage{amsmath}
\usepackage{amsfonts}
\usepackage{amssymb}
\usepackage{mathrsfs}
\usepackage{caption}
\usepackage{latexsym}
\usepackage{color}
\usepackage{hyperref}
\usepackage{url}
\usepackage{enumerate}
\usepackage{arydshln} 
\usepackage{multicol} 
\usepackage{tikz}
\usepackage{booktabs}
\usetikzlibrary{decorations,arrows}
\usetikzlibrary{decorations.markings}

\renewcommand{\proof}{\noindent{\it Proof.\ \ }}
\renewcommand{\qed}{\ifmmode\square\else\nolinebreak\hfill
$\Box$\fi\par\vskip12pt}

\renewcommand\a{\alpha}  \renewcommand\b{\beta}  \renewcommand\d{\delta}
\def\s{\sigma} \def\g{\gamma}
\newcommand\e{\varepsilon} 
\def\th{\theta}

\newcommand\Ga{\mathrm{\Gamma}}

       \newcommand\D{\mathrm{D}}

\newcommand\Q{\mathrm{Q}}

\newcommand\mz{\mathbb{Z}}

          \newcommand\Aut{\mathrm{Aut}}
   \newcommand\Cay{\mathrm{Cay}}

\newcommand\Hol{\mathrm{Hol}}

\def\mod{\hbox{\rm \ mod}\,}

             \newcommand\Sym{\mathrm{Sym}}
          
            \newcommand\la{\langle}
\newcommand\ra{\rangle}

\newtheorem{theorem}{Theorem}[section]%
\newtheorem{lemma}[theorem]{Lemma}%
\newtheorem{corollary}[theorem]{Corollary}%
\newtheorem{proposition}[theorem]{Proposition}%

\title{Generalized quaternion NCI-groups, NNN-groups and NNND-groups}

\author[J.-F.~Yang]{Jun-Feng Yang}
\address{School of mathematics and statistics, Beijing Key Laboratory of Biological Big Data and Topological Statistics, Beijing Jiaotong University, Beijing, 100044, P.R. China}
\email{jf.yang@bjtu.edu.cn}

\author[J-L.~Du]{Jia-Li Du$^*$}
\address{School of Mathematical Sciences, Ministry of Education Key Laboratory of NSLSCS, Nanjing Normal University, Nanjing, 210023, P.R. China}
\email{dujl@njnu.edu.cn}

\author[Y.-Q.~Feng]{Yan-Quan Feng}
\address{School of mathematics and statistics, Beijing Key Laboratory of Biological Big Data and Topological Statistics, Beijing Jiaotong University, Beijing, 100044, P.R. China}
\email{yqfeng@bjtu.edu.cn}

\author[Y.S.~Kwon]{Young Soo Kwon}
\address{Mathematics, Yeungnam University, Kyongsan 712-749, Republic of Korea}
\email{ysookwon@ynu.ac.kr}

\thanks{$^*$ Corresponding author}
\thanks{The first and third authors were supported by the National Natural Science Foundation of China (12331013, 12271024). The second author was supported by the National Natural Science Foundation of China (12571370, 12271024) and National Key R\&D Program of China (211070B62501). The fourth author was
partially supported by the Basic Science Research Program through the National Research Foundation of
Korea (NRF) funded by the Ministry of Education (2018R1D1A1B05048450, 2021K2A9A2A11101586).}

\begin{document}
\date{}
\maketitle

\begin{abstract}
A Cayley (di)graph $\Cay(G,S)$ of a finite group $G$ is called CI if, for every Cayley (di)graph $\Cay(G,T)$ of $G$, $\Cay(G,S)\cong \Cay(G,T)$ implies that $S^{\sigma}=T$ for some $\sigma\in \Aut(G)$. The group $G$ is called an NDCI-group (resp. NCI-group) if every normal Cayley digraph (resp. graph) of $G$ is CI. It was shown that the generalized quaternion group $\Q_{4n}$ of order $4n$ ($n\geq 2$) is an NDCI-group if and only if either $n=2$ or $n$ is odd, but its NCI-group classification has been left as an open question. In this paper, we solve the question and prove that $\Q_{4n}$ is an NCI-group for every $n\geq 2$. A normal Cayley (di)graph of a group $G$ is called NNN if its automorphism group contains a non-normal regular subgroup isomorphic to $G$, and $G$ is called an NNND-group (resp. NNN-group) if it admits an NNN Cayley digraph (resp. graph). In this paper, we show that $\Q_{4n}$ is not an NNN-group for every $n\geq 2$, and is an NNND-group if and only if $n\geq 6$ and $n$ is even.

\bigskip
\noindent {\bf Key words:}  CI-digraph, NCI-group, NNN-group, NNND-group, generalized quaternion group.\\
{\bf 2020 Mathematics Subject Classification:} 20B25, 05C25.
\end{abstract}

\section{Introduction}

Let $\Ga$ be a digraph. Denote by $V(\Ga)$ and $A(\Ga)$ the vertex set and arc set of $\Ga$ respectively, and by $\Aut(\Ga)$ the automorphism group of $\Ga$. If $\Ga$ is a graph, $V(\Ga)$ and $E(\Ga)$ are the vertex set and edge set of $\Ga$, respectively. In this paper, we view a graph $\Ga$ as a digraph with the  same vertex set $V(\Ga)$ and arc set $\{(u,v)\ |\ \{u,v\}\in E(\Ga)\}$, and this causes no ambiguity. 

Let $G$ be a group and let $S$ be a subset of $G$ with the identity of $G$ not in $S$. The {\em Cayley digraph $\Cay(G,S)$} of $G$ with respect to $S$ is defined to have vertex set $G$ and arc set $\{(g,sg)\ |\ g\in G, s\in S\}$, while $\Cay(G,S)$ is called a {\em Cayley graph} when $S=S^{-1}$. Clearly, $\Ga$ is connected if and only if $\la S\ra=G$, that is, $S$ generates $G$. For $g\in G$, the right multiplication map $R(g): x\mapsto xg$, $x\in G$, is an automorphism of $\Cay(G,S)$. Write $R(G)=\{R(g)\mid g\in G\}$. Then $R(G)$ is a regular subgroup of $\Aut(\Ga)$. It was shown that a digraph is isomorphic to a Cayley digraph of a group $G$ if and only if its automorphism group contains a regular subgroup isomorphic to $G$; see \cite{DMM}. A basic relation between $R(G)$ and $\Aut(\Cay(G,S))$ was given by Godsil~\cite{Godsil} and he proved that the normalizer of $R(G)$ in $\Aut(\Cay(G,S))$ is precisely $R(G) \rtimes \Aut(G,S)$, where $\Aut(G,S)=\{\a\in \Aut(G)\mid S^{\a}=S\}$. A Cayley digraph $\Cay(G,S)$ of $G$ is called {\em normal} if $R(G)$ is a normal subgroup in  $\Aut(\Cay(G,S))$; see Xu~\cite{XuM}. The above Godsil's result implies that if $\Aut(\Cay(G,S))$ is normal, then  $\Aut(\Cay(G,S))=R(G) \rtimes \Aut(G,S)$.

A classical problem in algebraic graph theory is the Cayley isomorphism (CI) problem. A Cayley digraph $\Cay(G,S)$ is called {\em CI} if for any Cayley digraph $\Cay(G,T)$, whenever $\Cay(G,S)\cong \Cay(G,T)$, we have $S^{\s}=T$ for some $\s\in \Aut(G)$, and a group $G$ is called a {\em DCI-group} (resp. {\em CI-group}), if every Cayley digraph (resp. Cayley graph) of $G$ is CI. The study of CI originated from a conjecture proposed by \'Ad\'am~\cite{Adam}: every finite cyclic group is a CI-group. Though it was disproved by Elspas and Turner~\cite{Elspas}, the classifications of cyclic DCI-groups and CI-groups lasted for 30 years. It was known that the \'Ad\'am's conjecture is true for the cyclic group of certain order $n$: $n=p$~\cite{Djokovic,Elspas,Turner}; $n=2p$~\cite{Babai}; $n=pq$~\cite{Alspach,Godsil}; $n=4p$ with $p>2$~\cite{Godsil}; $(n, \phi(n))=1$~\cite{Palfy}, where  $p$ and $q$ are distinct primes and $\phi$ is the Euler's function. Finally, Muzychuk~\cite{Muzychuk,M.Muzychuk} proved that a cyclic group of order $n$ is a DCI-group if and only if $n=mk$ where $m=1,2,4$ and $k$ is odd square-free, and is a CI-group if and only if either $n=8,9,18$, or $n=mk$ where $m=1,2,4$ and $k$ is odd square-free, which is one of the remarkable achievements in the study of DCI-groups and CI-groups. For other groups, there are also a lot of works on this line, and we do not intend to give a full account on their study in general. Interested readers are referred to \cite{Kov,KR,Li,LLP,SM}, and in particular to \cite{AN,CL,D2,Feng,HM,J.M,Mu,N,So,Sp,Sp1} for elementary abelian groups. Finite DCI-groups and CI-groups  have been reduced to some special restricted groups; see \cite{Li} for DCI-groups and \cite{DMS,LLP} for CI-groups. However, it is very difficult to determine whether a particular group in the special restricted groups is a DCI-group or a CI-group. In fact, it is still an open question to classify dihedral or generalized quaternion CI-groups or DCI-groups. 

In recent years, CI have been considered for normal Cayley digraphs. Note that Li~\cite{Li} provided examples of normal Cayley digraphs that are not CI, and proposed the following question: Characterize normal Cayley digraphs which are not CI-digraphs. More recently, Ryabov~\cite{Ryabov2025} studied the CI-property of normal Cayley digraphs over abelian groups, and reduced the abelian case to abelian $p$-groups.
A group $G$ is called an {\em NDCI-group} if every normal Cayley digraph of $G$ is a CI-digraph, and an {\em NCI-group} if every normal Cayley graph of $G$ is a CI-graph. Note that every DCI-group is an NDCI-group, and every CI-group is an NCI-group, but the converses are not true. Xie, Feng, Ryabov and Liu~\cite{XFRL} proved that a cyclic group of order $n$ is NDCI if and only if $8\nmid n$, and is NCI if and only if either $n=8$ or $8\nmid n$. For dihedral groups, Xie, Feng and Zhou~\cite{XFZ} proved that a dihedral group of order $2n$ is NDCI or NCI if and only if $n=2,4$, or $n$ is odd. In 2022, Xie, Feng and Kwon~\cite{XFK} proved that a generalized quaternion group of order $4n$ is NDCI if and only if either $n=2$ or $n$ is odd, but left the NCI case as an open question. In this paper, we solve this problem; see Theorem~\ref{mainth1}.

A Cayley digraph $\Gamma=\Cay(G,S)$ is called {\em NNN} when $\Gamma$ is normal and $\Aut(\Gamma)$ has a non-normal regular subgroup isomorphic to $G$. A group $G$ is called an {\em NNND-group} or {\em NNN-group} if there exists an NNN Cayley digraph or graph of $G$, respectively. Since a graph is viewed as a digraph, every NNN-group is an NNND-group, but the converse is not true. 

The first NNN-graph was constructed by Royle~\cite{Royle} in 2008, which is a normal Cayley graph of $\mz_2^6$. In 2010, Giudici and Smith~\cite{GS} built another NNN-graph, a strongly regular Cayley graph of $\mz_6^2$. In 2011, Bamberg and Giudici~\cite{BM} found the first infinite family of NNN-graphs as point graphs of a certain family of generalized quadrangles. Moreover, Xu~\cite{XuY17} gave further infinite families of NNN-graphs using Cartesian product, direct product, and strong product in 2017.

In 2018, Xu~\cite{XuY18} proved that every elementary abelian $2$-group $\mz_2^d$ with $d\ge 6$ is an NNN-group, and that every non-abelian simple group is not an NNND-group. In 2022, Xu~\cite{XuY22} proved that the symmetric group $S_n$ of degree $n$ is an NNN-group if and only if $n\ge 5$. In 2019, Giudici, Morgan, and Xu~\cite{GLX} showed that every cyclic group is not an NNN-group. Using a different method from \cite{GLX}, Yang et al.~\cite{Yang} also proved that there is no cyclic NNN-group and NNND-group, and further proved that a dihedral group $\D_{2n}$ of order $2n$ is an NNN-group or NNND-group if and only if $n\ge 6$ is even and $n\neq 8$. In this paper, we classify generalized quaternion NNN-groups and NNND-groups. As a result, we find infinitely many generalized quaternion NNND-groups but not NNN-groups, and to the best of our knowledge, this is the first discovery of an infinite family of groups with such property.   

Let $n\geq 2$ and let $\Q_{4n}$ be the generalized quaternion group of order $4n$. Our first result in this paper is about NCI-group. 
 
\begin{theorem}\rm\label{mainth1}
Every generalized quaternion group $\Q_{4n}$ $(n\geq 2)$ is an NCI-group.
\end{theorem}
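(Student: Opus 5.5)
The plan is to reduce, via Babai's criterion, to a statement about conjugacy of regular subgroups, and then to exploit the extra symmetry that graphs (as opposed to digraphs) carry. Babai's lemma says that $\Cay(G,S)$ is CI if and only if every regular subgroup of $\Aut(\Cay(G,S))$ isomorphic to $G$ is conjugate to $R(G)$ in $\Aut(\Cay(G,S))$. Let $\Ga=\Cay(\Q_{4n},S)$ be a normal Cayley graph, so $S=S^{-1}$. Then $A:=\Aut(\Ga)=R(\Q_{4n})\rtimes\Aut(\Q_{4n},S)$. We must show that every regular subgroup $H\le A$ with $H\cong\Q_{4n}$ is conjugate to $R(\Q_{4n})$ in $A$.

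Write $\Q_{4n}=\la a,b\mid a^{2n}=1,\ b^2=a^n,\ b^{-1}ab=a^{-1}\ra$. The case where $n$ is odd or $n=2$ is already covered by the NDCI result of \cite{XFK}. So assume $n\ge 4$ is even. $\Aut(\Q_{4n})$ consists of the maps $a\mapsto a^r$, $b\mapsto a^sb$ with $(r,2n)=1$, so $A\le R(\Q_{4n})\rtimes\Aut(\Q_{4n})=\Hol(\Q_{4n})$. The first step is to analyse regular subgroups $H\cong\Q_{4n}$ of $\Hol(\Q_{4n})$ that lie inside $R(\Q_{4n})\rtimes K$ for some $K\le \Aut(\Q_{4n})$.

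The unique involution $a^n$ is central and characteristic, and $R(\la a\ra)$ is normal in the holomorph when $n>2$. Using this, I would show that $H$ contains a cyclic subgroup of order $2n$ acting semiregularly, and that this subgroup must be $R(\la a\ra)$ up to conjugation, or a twisted version $\{R(a^i)\tau^i\}$ with $\tau$ a $2$-element of $\Aut(\Q_{4n},S)$. The twisted elements are of the form $R(a^{i})\sigma_{i}$, where $\sigma_i$ lies in the subgroup of automorphisms fixing $a$ and sending $b\mapsto a^{s}b$. This is exactly the mechanism producing non-CI normal digraphs in \cite{XFK} when $n$ is even. From this analysis I would derive an explicit parametrisation of the non-normal regular subgroups $H\cong\Q_{4n}$: such an $H$ is generated by $R(a)$ (or a conjugate) together with an element $R(a^jb)\beta$, with $\beta$ an automorphism of the shape $a\mapsto a^{1+n}$ or $b\mapsto a^{n/2}b$-type, and so on. The regularity condition, together with $H\cong\Q_{4n}$, pins down finitely many shapes.

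The key step, which I expect to be the main obstacle, is to show that when $S=S^{-1}$ any such $\beta$ lying in $\Aut(\Q_{4n},S)$ forces $H$ to be conjugate to $R(\Q_{4n})$ by an element of $\Aut(\Q_{4n},S)$. Alternatively, it may force $\Aut(\Q_{4n},S)$ to be large enough that $\Ga$ is non-normal, a contradiction. The inverse-closure is used here. Every element $a^ib$ has inverse $a^{i+n}b$, so $S\cap\la a\ra b$ is a union of cosets of $\la a^n\ra$. Likewise $S\cap\la a\ra$ is inverse-closed. The automorphisms causing trouble in the digraph case (e.g.\ $b\mapsto a^{n}b$ combined with $a\mapsto a^{1+n}$) then act on $S$ like the inner automorphism by $a^{n/2}$, or like a right translation. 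I would first check whether the offending $\beta$ fixing $S$ implies that the map $x\mapsto x^{-1}$, composed with a translation, preserves the structure in a way that yields an explicit element of $A$ conjugating $H$ to $R(\Q_{4n})$. If that fails, I would show instead that $\beta\in\Aut(\Q_{4n},S)$ with $S=S^{-1}$ produces a non-trivial automorphism of $\Ga$ fixing $1$ outside $\Aut(\Q_{4n})$, for instance swapping the cosets $\la a\ra$ and $\la a\ra b$ pointwise suitably. That contradicts normality.

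Finally, I would handle small cases such as $n=4$ separately, by direct computation if needed, and then conclude by Babai's lemma.
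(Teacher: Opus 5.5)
\textbf{There is a genuine gap.} Your reduction via Babai's criterion is fine, and so is your disposal of $n=2$ and $n$ odd. But the proposal stops exactly where the proof has to start, and the structure you intend to derive on the way is neither justified nor, as stated, true.

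Since $A=R(\Q_{4n})\rtimes\Aut(\Q_{4n},S)$ normalizes $R(\Q_{4n})$, being conjugate to $R(\Q_{4n})$ in $A$ means being equal to it. So your first option is empty: no element of $A$ (e.g.\ inversion composed with a translation) can conjugate $H$ onto $R(\Q_{4n})$ unless $H=R(\Q_{4n})$ already. The only route is to show that every regular $H\cong\Q_{4n}$ with $H\neq R(\Q_{4n})$ forces $\Gamma$ to be non-normal.

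Your claim that the cyclic subgroup $\la x\ra$ of order $2n$ in $H$ is $R(\la a\ra)$ or a twist $\{R(a^i)\tau^i\}$ cannot come from holomorph and block considerations alone, because it fails for digraphs. In Lemma~\ref{NNND}, $H=\la R(ab)\alpha,R(b)\ra$ with $\alpha\colon a\mapsto a^{n-1}$. There $x=R(ab)\alpha$ interchanges $\la a\ra$ and $\la a\ra b$, and $R(a)\notin H$. Excluding this swapping configuration is precisely where $S=S^{-1}$ has to be used, and it is most of the work.

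Your parametrisation ``$R(a)$ together with $R(a^jb)\beta$'' is moreover vacuous. Once $R(a)\in H$, the relations $R(a)^y=R(a)^{-1}$ and $y^2=R(a^n)$ force $\beta=1$, so $H=R(\Q_{4n})$. Also, a priori the twist on the odd parts is by the odd-order elements $\sigma_{a_j}^{r_j}$, not by a $2$-element.

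Two ingredients are missing:
\begin{itemize}
\item[(a)] a concrete mechanism turning a ``bad'' element of $\Aut(\Q_{4n},S)$ into non-normality;
\item[(b)] an argument that some bad element exists whenever $H\neq R(\Q_{4n})$.
\end{itemize}

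The paper's (a) is Lemma~\ref{lm1}. For a graph, $\sigma_{a^n}\in\Aut(\Q_{4n},S)$; this is your inverse-closure remark. An element of $\la\sigma_a\ra\alpha_{s+1}$ (where $\alpha_{s+1}\colon a\mapsto a^{n+1}$, $b\mapsto b$) or of $\la\sigma_a\ra\varepsilon\alpha_{s+1}$ lying in $\Aut(\Q_{4n},S)$ then yields a non-identity automorphism of $\Gamma$ fixing $C_{2n}b$ pointwise. Such an automorphism cannot lie in $\Aut(\Q_{4n},S)$, contradicting normality. For odd $p_i$, $\la\beta_i\ra\alpha_i\cap\Aut(\Q_{4n},S)\neq\emptyset$ gives non-normality via Proposition~\ref{MainPro}.

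The paper's (b) proceeds in steps:
\begin{itemize}
\item It pins down $H_{p_i}=\la R(a_i)\sigma_{a_i}^{r_i}\ra$ for odd $p_i$ (Lemma~\ref{Hp_i1-s}), using that $p$-groups with cyclic derived subgroup are regular.
\item It writes $2$-elements of $H$ as $R(z)\gamma_1\cdots\gamma_{s+1}$ and separates the cases where they fix or swap $C_{2n}$ and $C_{2n}b$.
\item In the swapping case it combines $x^y=x^{-1}$ with Lemma~\ref{lm1} to reach $x_{s+1}^4=1$, against $o(x_{s+1})=2^{k_{s+1}}\ge 8$; when $k_{s+1}=2$ it invokes Lemma~\ref{lm1}(3) instead.
\item In the fixing case it gets $x_{s+1}=R(c)$ and $y=R(db)$, so $H=R(\Q_{4n})$.
\end{itemize}

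Your sketch has neither (a) nor (b), nor a substitute. The step you yourself flag as the main obstacle remains open, so the proposal does not prove the theorem.
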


By Babai~\cite{Babai}, a Cayley digraph $\Ga=\Cay(G, S)$ of a group $G$ is CI if and only
if all regular subgroups of $\Aut(\Ga)$ isomorphic to $G$ are conjugate. From this we know that if $\Ga$ is
a normal Cayley digraph of $G$, then $\Ga$ is CI if and only if $R(G)$ is the unique regular subgroup
of $\Aut(\Ga)$ isomorphic to $G$. This implies that every NDCI-group is not an NNND-group and every NCI-group is not an NNN-group. Thus, we have the following corollary.  

\begin{corollary}\rm\label{corollary2}
Every generalized quaternion group $\Q_{4n}$ $(n\geq 2)$ is not an NNN-group.
\end{corollary}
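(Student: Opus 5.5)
Corollary~\ref{corollary2} follows directly from Theorem~\ref{mainth1} together with Babai's criterion recalled just above, so the plan is a short chain of implications rather than a new argument.

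\textbf{Step 1: fix a potential counterexample.} Suppose, for a contradiction, that $\Q_{4n}$ is an NNN-group for some $n\geq 2$. By definition there is a Cayley graph $\Ga=\Cay(\Q_{4n},S)$, with $S=S^{-1}$, having two properties:
\begin{itemize}
\item[(a)] $\Ga$ is normal, that is, $R(\Q_{4n})\trianglelefteq \Aut(\Ga)$;
\item[(b)] $\Aut(\Ga)$ contains a regular subgroup $H\cong \Q_{4n}$ that is not normal in $\Aut(\Ga)$.
\end{itemize}

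\textbf{Step 2: apply Theorem~\ref{mainth1}.} By Theorem~\ref{mainth1}, $\Q_{4n}$ is an NCI-group. Since $\Ga$ is a normal Cayley graph of $\Q_{4n}$, it is therefore a CI-graph.

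\textbf{Step 3: apply Babai's criterion.} By Babai~\cite{Babai}, because $\Ga$ is CI, every regular subgroup of $\Aut(\Ga)$ isomorphic to $\Q_{4n}$ is conjugate to $R(\Q_{4n})$. In particular $H=R(\Q_{4n})^{\phi}$ for some $\phi\in\Aut(\Ga)$. By (a), $R(\Q_{4n})$ is normal in $\Aut(\Ga)$, so its conjugate satisfies $R(\Q_{4n})^{\phi}=R(\Q_{4n})$. Hence $H=R(\Q_{4n})$ is normal in $\Aut(\Ga)$, which contradicts (b). So no NNN Cayley graph of $\Q_{4n}$ exists, and $\Q_{4n}$ is not an NNN-group.

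\textbf{Where the difficulty lies.} None of these steps is an obstacle on its own. The only subtle point is that we must be able to apply NCI, which concerns undirected graphs; this is legitimate precisely because an NNN-group is defined through Cayley \emph{graphs}, not digraphs. The substantive work is entirely contained in Theorem~\ref{mainth1}, which is assumed here.
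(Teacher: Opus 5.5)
Your proposal is correct and follows essentially the same route as the paper. You use Theorem~\ref{mainth1} to make every normal Cayley graph of $\Q_{4n}$ a CI-graph. Then Babai's criterion together with $R(\Q_{4n})\trianglelefteq\Aut(\Ga)$ forces every regular subgroup of $\Aut(\Ga)$ isomorphic to $\Q_{4n}$ to equal $R(\Q_{4n})$, so no NNN Cayley graph of $\Q_{4n}$ exists.
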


For generalized quaternion NNND-group, we have the following result.

\begin{theorem}\rm\label{mainth3}
A generalized quaternion group $\Q_{4n}$  $(n\geq 2)$  is an NNND-group if and only if $n\ge 6$ and $n$ is even.
\end{theorem}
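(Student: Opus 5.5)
We split the statement into the two directions of the ``if and only if''. For necessity, recall from the Babai criterion quoted before Corollary~\ref{corollary2} that an NDCI-group is never an NNND-group. By the result of Xie, Feng and Kwon~\cite{XFK}, $\Q_{4n}$ is NDCI whenever $n=2$ or $n$ is odd, so in these cases $\Q_{4n}$ is not an NNND-group. This leaves the even values $n=4$ and, for the other direction, all even $n\ge 6$.

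To exclude $n=4$, we take $\Q_{16}$ and argue by direct analysis. Let $\Ga=\Cay(\Q_{16},S)$ be a normal Cayley digraph, so that $A=\Aut(\Ga)=R(\Q_{16})\rtimes\Aut(\Q_{16},S)$. Then $|A|$ divides $16\cdot|\Aut(\Q_{16})|=16\cdot 32$, and $A$ is a $2$-group. Suppose $H\le A$ is a regular subgroup with $H\cong\Q_{16}$ and $H\neq R(\Q_{16})$. Then $H$ is normalized by some element of $A$ outside it, and we use the structure of the cyclic subgroup of order $8$ in each of $H$ and $R(\Q_{16})$ to derive a contradiction. Concretely, we show that $H\cap R(\Q_{16})$ contains $R(\la a\ra)$, where $a$ is an element of order $8$. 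From this, $H=R(\la a\ra)\la\tau\ra$ with $\tau=R(b)\alpha$ for some $\alpha\in\Aut(\Q_{16},S)$ inducing a restricted automorphism. Checking the few possibilities for $\alpha$ should show that either $H=R(\Q_{16})$ or $S$ is forced to be invariant under a non-automorphism permutation, which contradicts normality. If this hand argument becomes messy, we would fall back on a finite computation over the subsets $S$ of $\Q_{16}$ up to $\Aut(\Q_{16})$, which is also acceptable.

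For sufficiency, let $n=2m$ with $n\ge 6$, and write $\Q_{4n}=\la a,b\mid a^{2n}=1,\ b^2=a^n,\ b^{-1}ab=a^{-1}\ra$. The idea is to mimic the dihedral construction of Yang et al.~\cite{Yang}. We take the automorphism $\sigma\colon a\mapsto a^{n+1},\ b\mapsto b$, or alternatively $\sigma\colon a\mapsto a,\ b\mapsto ab$ composed suitably. We then form the permutation group $R(\Q_{4n})\rtimes\la\sigma\ra$ and look inside it for a second regular subgroup $H=\la R(a)\sigma^{i},\,R(b)\sigma^{j}\ra$ isomorphic to $\Q_{4n}$ but not normal. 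The evenness of $n$ is what allows $a^{n+1}$ to have the right order behaviour, and $n\ge 6$ is needed to avoid the small collapses seen at $n=4$. Next we choose a digraph connection set $S$, deliberately non-symmetric, with the following properties:
\begin{enumerate}[(i)]
\item $\Aut(\Q_{4n},S)=\la\sigma\ra$;
\item $\Ga=\Cay(\Q_{4n},S)$ is normal;
\item $H\le\Aut(\Ga)$ is regular and $H$ is not normal in $\Aut(\Ga)$.
\end{enumerate}
Property (iii) follows once (ii) holds and $H$ is not normalized by $R(\Q_{4n})$, which is a direct computation. The non-symmetry of $S$ is essential here, since Corollary~\ref{corollary2} shows that no undirected choice can work.

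The main obstacle is property (ii): proving that $\Ga$ is normal, that is, $\Aut(\Ga)=R(\Q_{4n})\rtimes\la\sigma\ra$. The plan is to choose $S$ with small valency and asymmetric structure, for example containing $a$ and $b$ but not their inverses, together with a few elements of the form $a^kb$. We then count directed triangles and $2$-paths through the identity to show that $\Aut(\Ga)_1$ fixes $a$ and the relevant coset setwise. Arguing by connectivity, $\Aut(\Ga)_1$ then acts faithfully on $S$ as a subgroup of $\la\sigma\ra$, which bounds $|\Aut(\Ga)|\le 4n\cdot|\la\sigma\ra|$ and gives normality.
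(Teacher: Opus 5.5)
Your necessity direction matches the paper. The cases $n=2$ and $n$ odd are excluded by Proposition~\ref{ndci} together with Babai's criterion, and $n=4$ is settled by a finite computation. Your hand sketch for $\Q_{16}$ is not an argument: the claim $H\cap R(\Q_{16})\supseteq R(\la a\ra)$ is unsupported and the key step is only ``should show''. So it is the computational fallback, as in the paper's Magma check, that actually handles this case.

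The genuine gap is sufficiency. You never specify $S$ or $H$, and the one concrete design you commit to cannot work.
\begin{itemize}
\item Your $\sigma\colon a\mapsto a^{n+1},\ b\mapsto b$ is an involution, since $(n+1)^2\equiv 1\pmod{2n}$ for even $n$; it is the paper's $\a_{s+1}$.
\item If (i) and (ii) held, then $\Aut(\Ga)=R(\Q_{4n})\rtimes\la\sigma\ra$ would have order $8n$. Any regular subgroup $H$ of order $4n$ would then have index $2$ in $\Aut(\Ga)$ and so be normal. Thus (i)--(iii) are mutually inconsistent.
\item Your test for (iii) fails for a related reason. Since $R(\Q_{4n})\unlhd\Aut(\Ga)$, the product $R(\Q_{4n})H$ is a subgroup. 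Whenever $|H\cap R(\Q_{4n})|=2n$ (forced in your setup, and also true in the paper's example, where $H\cap R(\Q_{4n})=R(\la a^2,b\ra)$), it has order $8n$ and contains $H$ with index $2$. Hence $R(\Q_{4n})$ normalizes $H$.
\item Consequently non-normality must be witnessed by an element of $\Aut(\Q_{4n},S)$, and that stabilizer must have order larger than $2$.
\end{itemize}
Your alternative ``$b\mapsto ab$ composed suitably'' is not specified, so it does not fill the gap.

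The missing idea is the paper's choice $S=\{a,a^{n-1}\}\cup\la a^{n/2}\ra b$, whose stabilizer $\la\a,\b\ra$ has order $8$. Here $\a\colon a\mapsto a^{n-1},\ b\mapsto b$ swaps the two non-symmetric elements, and $\b=\s_{a^{n/2}}$, of order $4$, cycles $\la a^{n/2}\ra b$.
\begin{itemize}
\item The regular subgroup is $H=\la R(ab)\a,R(b)\ra$, with $(R(ab)\a)^2=R(a^2)$ and $(R(ab)\a)^{R(b)}=(R(ab)\a)^{-1}$.
\item $H^\b\neq H$: otherwise $R(a^{3n/2})=R(b)^\b R(b)\in H$. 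This forces $R(a)\in H$ when $n/2$ is odd, and $\b^2\in H$ (contradicting regularity) when $n/2$ is even.
\item Normality, i.e.\ $|\Aut(\Ga)_1|=8$, comes from separating the directed part $\Cay(\Q_{4n},\{a,a^{n-1}\})$ from the undirected part, which is $n/2$ copies of $K_{4,4}$. One then shows that the pointwise stabilizer of $1,a,b$ is trivial.
\end{itemize}
This last step is exactly where $n\ge 6$ enters: it gives $a^{n-2}\notin\la a^{n/2}\ra$, so the cosets $\la a^{n/2}\ra ba^{1-n}$ and $\la a^{n/2}\ra ba^{-1}$ are distinct. Your remark about ``small collapses'' at $n=4$ does not identify this mechanism.
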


The paper is organized as follows. In Section 2, we present some definitions and preliminary results for later use. Section 3 proves Theorem~\ref{mainth1}, and Section 4 proves Theorem~\ref{mainth3}. In what follows, all digraphs considered are finite and simple, and all groups are finite. 

\section{Preliminaries}

Let $\Ga=\Cay(G,S)$ be a Cayley digraph of a group $G$. Recall that $\Aut(G,S)\leq \Aut(\Ga)_1$, where $\Aut(\Ga)_1$ is the stabilizer of the identity vertex $1$ in $\Aut(\Ga)$. The following proposition provides a criterion for $\Ga$ to be a normal Cayley digraph.

\begin{proposition}\rm\label{normal}(\cite[Proposition 1.5]{XuM}) Let $\Ga=\Cay(G,S)$ be a Cayley digraph of a group $G$ with respect to $S$. Then $\Ga$ is normal if and only if $\Aut(\Ga)=R(G)\rtimes \Aut(G,S)$, and if and only if $\Aut(\Ga)_1=\Aut(G,S)$. 
\end{proposition}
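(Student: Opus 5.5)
The statement to prove is Proposition~\ref{normal}: $\Ga=\Cay(G,S)$ is normal if and only if $\Aut(\Ga)=R(G)\rtimes\Aut(G,S)$, if and only if $\Aut(\Ga)_1=\Aut(G,S)$. The plan is to run through the chain of implications normal $\Rightarrow$ semidirect decomposition $\Rightarrow$ stabilizer equality $\Rightarrow$ normal, using only Godsil's description of the normalizer recalled in the introduction. That description says $N_{\Aut(\Ga)}(R(G))=R(G)\rtimes\Aut(G,S)$, where $\Aut(G,S)$ is viewed as a subgroup of $\Aut(\Ga)_1$. We will also use the Frattini-type factorization $A=R(G)A_1$, where $A=\Aut(\Ga)$; it holds because $R(G)$ is transitive on vertices.

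\emph{Step 1 (normal implies semidirect decomposition).} If $R(G)\trianglelefteq A$, then $A=N_A(R(G))$, and Godsil's result gives $A=R(G)\rtimes\Aut(G,S)$ directly.

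\emph{Step 2 (semidirect decomposition implies stabilizer equality).} Assume $A=R(G)\rtimes\Aut(G,S)$ and take $x\in A_1$. Write $x=R(g)\alpha$ with $\alpha\in\Aut(G,S)$. Since $\alpha$ fixes $1$ and $1^{R(g)}=g$, we get $1^x=(1^{R(g)})^\alpha=g^\alpha$. Because $x$ fixes $1$, this forces $g^\alpha=1$, so $g=1$ and $x=\alpha$. Hence $A_1\le\Aut(G,S)$; the reverse inclusion is the standard observation that each $\alpha\in\Aut(G,S)$ maps arcs $(g,sg)$ to arcs $(g^\alpha,s^\alpha g^\alpha)$ and fixes $1$.

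\emph{Step 3 (stabilizer equality implies normal).} Assume $A_1=\Aut(G,S)$. Then $A=R(G)A_1=R(G)\Aut(G,S)$. A direct computation gives $\alpha^{-1}R(g)\alpha=R(g^\alpha)$, so $\Aut(G,S)$ normalizes $R(G)$. Since $R(G)$ also normalizes itself, it is normal in $A$.

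Every step is routine. The only point that needs care is keeping the composition conventions consistent (right actions) in the conjugation identity of Step 3 and in the computation $1^x=g^\alpha$ of Step 2. I expect no real obstacle.
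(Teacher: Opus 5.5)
Your proof is correct and matches the paper's treatment. The paper does not prove this proposition itself (it is cited from Xu); the only reasoning it offers is the remark in the introduction that Godsil's description $N_{\Aut(\Gamma)}(R(G))=R(G)\rtimes\Aut(G,S)$ gives the forward implication, which is exactly your Step~1. Your Steps~2 and~3 are the standard completion via $A=R(G)A_1$ and the identity $R(g)^{\alpha}=R(g^{\alpha})$, which the paper also records in Section~3.
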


The following is a classification of generalized quaternion NDCI-groups.

\begin{proposition}\rm\label{ndci}(\cite[Theorem 1.1]{XFK})
Let $n\ge 2$ be a positive integer and let $\Q_{4n}$ be the generalized quaternion group of order $4n$. Then $\Q_{4n}$ is an NDCI-group if and only if either $n=2$ or $n$ is odd.
\end{proposition}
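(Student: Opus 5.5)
Since the statement is a biconditional, I would prove it in two directions. Throughout I use $\Q_{4n}=\la a,b\mid a^{2n}=1,\ b^2=a^n,\ b^{-1}ab=a^{-1}\ra$. Every automorphism of $\Q_{4n}$ (for $n\ge3$) has the form $a\mapsto a^i$, $b\mapsto a^jb$ with $\gcd(i,2n)=1$, because $\la a\ra$ is characteristic. The tool in both directions is Babai's criterion recalled in the introduction. A normal Cayley digraph $\Ga=\Cay(G,S)$ is CI if and only if $R(G)$ is the unique regular subgroup of $\Aut(\Ga)=R(G)\rtimes\Aut(G,S)$ (Proposition~\ref{normal}) that is isomorphic to $G$.

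\emph{Sufficiency.} Suppose $n=2$ or $n$ is odd. Let $H\le R(G)\rtimes A$ be regular with $H\cong G$, where $A=\Aut(G,S)$. Let $\pi:R(G)\rtimes A\to A$ be the projection.

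For $n=2$, I would check directly. We have $\Aut(\Q_8)\cong S_4$, and $R(\Q_8)\rtimes S_4$ is small. Its regular subgroups isomorphic to $\Q_8$ are normalized by $R(\Q_8)$. Any such subgroup other than $R(\Q_8)$ would force $A$ to contain an involution acting in a specific way. I would then check, running over the subgroups $A\le S_4$ that occur as set stabilizers $\Aut(\Q_8,S)$, that this cannot happen, or can only happen with $H$ conjugate to $R(\Q_8)$.

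For $n$ odd, I would write $G=\la a^2\ra\rtimes\la b\ra\cong\mz_n\rtimes\mz_4$.
\begin{itemize}
\item Since $H$ is regular, $|H|=|R(G)|$, so $H$ contains a subgroup $N$ of order $n$; I take $N$ to be the Hall $2'$-part of $H$, which exists because $G$ has a normal Hall $2'$-subgroup $\la a^2\ra$.
\item Elements of $A$ of odd order act on $\la a^2\ra$ by $a^2\mapsto a^{2i}$, or as inner-type maps $b\mapsto a^{2j}b$.
\item The aim is to show that $N=R(\la a^2\ra)$, or that some element of $A$ conjugates $N$ onto $R(\la a^2\ra)$. I would argue that $N$ is semiregular and normalizes $R(\la a^2\ra)$, and then use that the odd-order part of $\Aut(\mz_n)$ acting fixed-point-freely is impossible.
\item Once $N=R(\la a^2\ra)$, the group $H$ is $N\rtimes P$ with $P$ cyclic of order $4$.
\item The cyclic Sylow $2$-subgroups of $R(G)\rtimes A$ containing a regular-compatible $\mz_4$ are conjugate to $R(\la b\ra)$ under $N_A$. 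Hence $H$ is conjugate to $R(G)$.
\end{itemize}

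\emph{Necessity.} Let $n\ge4$ be even. I would exhibit a normal Cayley digraph that is not CI. Set $m=n+1$. Then $\gcd(m,2n)=1$, and $\a:a\mapsto a^{m},\ b\mapsto b$ is an involutory automorphism with $a^{\a}=a\cdot a^n$.

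The candidate subgroup is $H=\la R(a)\a',R(b)\ra$ for a suitable twist $\a'\in\{\a,\ \a\,\mathrm{inn}(b)\}$. I would verify that $H$ is regular, isomorphic to $\Q_{4n}$, and different from $R(G)$. Here one uses that $a^n$ is central of order $2$ and that $n$ is even, which makes the twisted generator still of order $2n$.

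Then I would choose a digraph connection set $S$, non-symmetric so that only the digraph case is affected. It should be built from orbits of $\la\a\ra$, for example $S=\{a,a^{m},b,a^2b^{-1}\ldots\}$ adjusted so that $\Aut(G,S)=\la\a\ra$ exactly. I would then show that $\Cay(G,S)$ is normal, so that $H\le R(G)\rtimes\la\a\ra=\Aut(\Ga)$ gives a second regular copy of $G$. By Babai's criterion, $\Ga$ is not CI.

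The main obstacle is this normality verification. I would first try to show that $\Aut(\Ga)_1$ fixes $S$ pointwise up to $\a$. To do this I would count common out-neighbours and directed triangles through arcs labelled by elements of $S$. This separates the $\la\a\ra$-orbits on $S$ and forces $\Aut(\Ga)_1=\Aut(G,S)$, after which Proposition~\ref{normal} gives normality.
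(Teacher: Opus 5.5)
\emph{The paper does not prove this proposition itself; it quotes it from Xie, Feng and Kwon.} So the comparison is with the tools the paper builds around it. There are genuine gaps in both directions of your proposal.

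\textbf{Sufficiency.} For odd $n$ you reason entirely inside $R(\Q_{4n})\rtimes A$, with $A=\Aut(\Q_{4n},S)$. You never use normality of $\Ga$ beyond $\Aut(\Ga)=R(\Q_{4n})\rtimes A$, and at that level the conclusion $H=R(\Q_{4n})$ is false. Suppose $S$ is a union of conjugacy classes. Then $A\ge\mathrm{Inn}(\Q_{4n})=\la\s_{a^2},\e\ra$, where $\e\colon a\mapsto a^{-1},\ b\mapsto b$ is conjugation by $b$ and $\s_{a^2}$ is conjugation by $a^{-1}$. The left regular representation $\mathcal{L}(g)\colon x\mapsto g^{-1}x$ equals $R(g^{-1})c_g$, with $c_g\colon y\mapsto g^{-1}yg$. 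Hence $\mathcal{L}(\Q_{4n})$ is a regular subgroup of $R(\Q_{4n})\rtimes A$, isomorphic to $\Q_{4n}$ and different from $R(\Q_{4n})$.

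Its Hall $2'$-subgroup $\mathcal{L}(\la a^2\ra)=\la R(a^2)\s_{a^2}^{2}\ra$ is semiregular and centralizes $R(\la a^2\ra)$. Yet it is not $R(\la a^2\ra)$, and it cannot be conjugated into $R(\Q_{4n})$ inside $R(\Q_{4n})\rtimes A$, where $R(\Q_{4n})$ is normal. The automorphisms $\s_c$ responsible act trivially on $\la a\ra$. So your appeal to odd-order elements of $\Aut(\mz_n)$ acting fixed-point-freely cannot exclude them.

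What excludes them is that such a $\Ga$ is never normal: Proposition~\ref{MainPro}(3) applies with $L=\la\s_{a^2}\ra$, $K=\la a^2\ra$ and $\g=\e$. The missing idea is therefore a systematic use of non-normality criteria. The paper does this in Lemma~\ref{lm1} and the proof of Theorem~\ref{mainth1}: there the twisted Sylow subgroups $\la R(a_j)\s_{a_j}^{2}\ra$ survive all the group theory and are eliminated only with the help of Lemma~\ref{lm1}. The same issue affects your $2$-part step and your $\Q_8$ check. For $S=\Q_8\setminus\{1\}$, the group $\mathcal{L}(\Q_8)$ already lies in $R(\Q_8)\rtimes\Aut(\Q_8,S)$. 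So the check must test normality of $\Cay(\Q_8,S)$, not just run over subgroups $A\le S_4$.

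\textbf{Necessity.} Your sketch is close to the paper's Lemma~\ref{NNND}, but it needs correcting.
\begin{itemize}
\item The twist $\a\colon a\mapsto a^{n+1}$ fails. We have $R(b)R(a)\a=R(ba)\a$ and $(R(ba)\a)^2=R(ba\cdot ba^{n+1})=1$. So $\la R(a)\a,R(b)\ra$ contains the two involutions $R(a^n)$ and $R(ba)\a$, and is not isomorphic to $\Q_{4n}$.
\item The other twist $\a'\colon a\mapsto a^{n-1},\ b\mapsto b$ does work, but not because $R(a)\a'$ has order $2n$. Its square is $R(a^n)$, so it has order $4$. An element of order $2n$ is $R(ba)\a'$, whose square is $R(a^{-2})$.
\item In fact $\la R(a)\a',R(b)\ra=\la R(ab)\a',R(b)\ra$ is exactly the subgroup $H$ of Lemma~\ref{NNND}. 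Consequently $S$ must be $\a'$-invariant, built from orbits like $\{a,a^{n-1}\}$ rather than $\{a,a^{n+1}\}$.
\end{itemize}

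The substantive step, normality of $\Cay(\Q_{4n},S)$, is only announced in your plan. The paper carries it out for $S=\{a,a^{n-1},b,a^{n/2}b,a^nb,a^{3n/2}b\}$. It separates the directed part $\Cay(\Q_{4n},\{a,a^{n-1}\})$ from the undirected part $\Cay(\Q_{4n},\la a^{n/2}\ra b)$ and proves $\Aut(\Ga)_{(1,a,b)}=1$. That argument needs $a^{n-2}\notin\la a^{n/2}\ra$, so it breaks down for $n=4$.

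Hence $\Q_{16}$ requires a separate normal non-CI digraph. The paper reports that $\Q_{16}$ is not an NNND-group, so in such an example the second regular subgroup must be normal in $\Aut(\Ga)$. This is what happens if, as you intend, $\Aut(\Q_{16},S)=\la\a'\ra$ exactly, since then $H$ has index $2$.
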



Let $G$ be a group and let $L\subseteq \Aut(G)$.
Set $F_G(L)=\{g\  |\ g^l=g \  {\rm for \ all}\  l\in L\}$, the set of fixed-points of $L$ in $G$.
The following proposition provides some sufficient conditions for a Cayley digraph to be non-normal.

\begin{proposition}\rm\label{MainPro} (\cite[Theorem 2.2]{XFK})
  Let $\Cay(G,S)$ be a Cayley digraph. Let $1\neq L\le \Aut(G,S)$ and let $K\unlhd G$ such that for every right coset $Kg$ in $G$, either $L$ fixes $Kg$ pointwise, or $Kg$ is an orbit of $L$. Assume that one of the following holds:
  \begin{itemize}
    \item[(1)] $|G:F_G(L)|>2$;
    \item[(2)] $|G:F_G(L)|=2$, and there are $g\in G\setminus F_G(L)$ and $k\in K$ such that $k^g\neq k^{-1}$;
    \item[(3)] $|G:F_G(L)|=2$, and there is $1\neq \gamma\in \Aut(G,S)$ such that $F_G(\la \gamma\ra)\neq F_G(L)$ and $\gamma$ fixes every coset of $K$ in $G$ setwise.
  \end{itemize}
  Then $\Cay(G,S)$ is non-normal.
\end{proposition}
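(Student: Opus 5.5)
The statement to be proved is Proposition~\ref{MainPro}. The plan is to produce, in each of the three cases, an automorphism of $\Ga=\Cay(G,S)$ fixing $1$ that is not a group automorphism of $G$. Then $\Aut(\Ga)_1\neq \Aut(G,S)$, and $\Ga$ is non-normal by Proposition~\ref{normal}.

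\textbf{The construction.} Fix $\lambda\in L$. Define a permutation $\tau$ of $G$ coset by coset. On a coset $Kg$ that $L$ fixes pointwise, $\tau$ is the identity. On a coset $Kg$ that is an $L$-orbit, $\tau$ acts as $\lambda$ on some of these orbits and as the identity on the others, chosen according to a fixed partition of the set of $L$-orbits.

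\textbf{Why $\tau$ is a digraph automorphism.} Take an arc $(x,sx)$ with $s\in S$. Since $K\unlhd G$, left and right cosets of $K$ coincide, so $x\mapsto sx$ maps cosets of $K$ to cosets of $K$. The map $\lambda$ fixes $S$ and is a group automorphism, so $(x^\lambda, s^\lambda x^\lambda)$ is again an arc. The remaining issue is an arc from a coset where $\tau=\lambda$ to a coset where $\tau=1$. Here the orbit hypothesis is used: because $Kg$ is a single $L$-orbit, whenever one arc joins $Kg$ to $Kh$ and $\lambda$ preserves arcs, the full adjacency pattern between the two cosets is $L$-invariant. For an $L$-orbit coset $Kg$, both $\lambda$ and $1$ act on it, and one has to check that the arc set between $Kg$ and $Kh$ is preserved under applying $\lambda$ on one side only. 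I expect to prove this by writing $Kh=Kg\cdot g^{-1}h$ and using that $L$ acts transitively on $Kg$ while commuting with the relevant right multiplications. This verification is the main obstacle. The intended resolution is that arcs between $Kg$ and $Kh$ are determined by $S\cap Kg^{-1}h$ (up to left/right conventions), and this set is $L$-invariant.

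\textbf{Why $\tau$ is not in $\Aut(G)$.}
\begin{itemize}
\item In case (1), $|G:F_G(L)|>2$ means there are at least two distinct $L$-orbit cosets. Apply $\lambda$ on exactly one of them and the identity elsewhere. Then $\tau$ fixes $1$ but fixes neither all of $G$ nor acts as $\lambda$ everywhere. An automorphism of $G$ that agrees with $1$ on the generating set $F_G(L)\cup(\text{the other orbit})$ would be trivial, a contradiction.
\item In case (2), only one kind of non-fixed coset exists, namely $G\setminus F_G(L)$. Instead take $\tau$ to be the map $x\mapsto x^{-1}$ combined with $\lambda$ on part of the coset. The condition $k^g\neq k^{-1}$ shows that this map is not a homomorphism.
\item In case (3), compose $\tau$ with $\gamma$. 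Since $\gamma$ stabilizes each coset of $K$ and $F_G(\la\gamma\ra)\neq F_G(L)$, the product fails to be an automorphism of $G$.
\end{itemize}
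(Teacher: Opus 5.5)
There is a genuine gap, and it is exactly the step you flag as the main obstacle. Applying $\lambda$ on some $L$-orbit cosets of $K$ and the identity on the others is in general \emph{not} an automorphism of $\Ga$. The $L$-invariance of $S\cap Khg^{-1}$ only shows that arcs between $Kg$ and $Kh$ are preserved when $\lambda$ acts on both cosets at once. A concrete failure: let $G=\mz_3^3$ (written additively), $\lambda\colon(u,v,w)\mapsto(u,u+v,w)$, $L=\la\lambda\ra$, $K=\{(0,v,0)\}$ and $S=\{(0,0,1),(0,0,2)\}$. Each coset of $K$ is fixed pointwise ($u=0$) or is an $L$-orbit ($u\neq 0$), $|G:F_G(L)|=3$, and $L\le\Aut(G,S)$. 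Now apply $\lambda$ on the single orbit $\{(1,v,0)\}$ and the identity elsewhere. This sends the edge $\{(1,0,0),(1,0,1)\}$ to $\{(1,1,0),(1,0,1)\}$, whose difference $(0,2,1)$ is not in $S$.

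The hypothesis gives all-or-nothing adjacency only between a vertex $x\in F_G(L)$ and an orbit coset $Ky$, since the out- and in-neighbours of $x$ in $Ky$ form an $L$-invariant subset of one $L$-orbit. It gives no control between two orbit cosets. Your cases (2) and (3) have further problems. The map $x\mapsto x^{-1}$ is not a digraph automorphism of $\Cay(G,S)$ in general. Composing any map with $\gamma\in\Aut(G)$ never changes whether it lies in $\Aut(G)$, so that cannot produce a non-automorphism. In case (1), $F_G(L)$ together with one further $K$-coset also need not generate $G$.

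The missing idea is to cut along $F_G(L)$ versus $G\setminus F_G(L)$, and on $G\setminus F_G(L)$ to apply a map that is already an automorphism of $\Ga$ and stabilizes every $K$-coset. (The paper only cites this result, but its hypotheses are tailored to this construction.) Note that $K\le F_G(L)$ because $1\in K$, so both parts are unions of $K$-cosets, and $K\neq 1$ because $L\neq 1$.

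For $k\in K$, let $\tau_k$ be $x\mapsto xk$ on $G\setminus F_G(L)$ and the identity on $F_G(L)$. Arcs inside $G\setminus F_G(L)$ are preserved since $R(k)\in\Aut(\Ga)$, and arcs inside $F_G(L)$ trivially. An arc between $x\in F_G(L)$ and $y\notin F_G(L)$ is preserved because $x$ is joined, in each direction, to all or none of $Ky=yK\ni yk$. So $\tau_k\in\Aut(\Ga)_1$.

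\begin{itemize}
\item Case (1): choose $x,y\notin F_G(L)$ with $xy\notin F_G(L)$, which is possible as $|F_G(L)\cup x^{-1}F_G(L)|<|G|$. Then $(xy)^{\tau_k}=xyk\neq xkyk=x^{\tau_k}y^{\tau_k}$ for $k\neq 1$.
\item Case (2): $g^2\in F_G(L)$, so $(g^2)^{\tau_k}=g^{\tau_k}g^{\tau_k}$ would mean $g^2=gkgk$, i.e.\ $k^g=k^{-1}$. This is precisely why condition (2) is phrased as it is.
\item Case (3): let $\tau$ be $\gamma$ on $G\setminus F_G(L)$ and the identity on $F_G(L)$. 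It lies in $\Aut(\Ga)_1$ by the same argument, since $\gamma$ stabilizes every $K$-coset. If $\tau\in\Aut(G)$, it agrees with $\gamma$ on the generating set $G\setminus F_G(L)$, so $\tau=\gamma$ and $\gamma$ fixes $F_G(L)$ pointwise. Since $\gamma\neq 1$ and $|G:F_G(L)|=2$, this forces $F_G(\la\gamma\ra)=F_G(L)$, a contradiction.
\end{itemize}

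In every case $\Aut(\Ga)_1\neq\Aut(G,S)$, and Proposition~\ref{normal} gives non-normality.
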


Let $G$ be a group and $x,y\in G$. Denote by $[x,y]$ the abbreviation for the {\em commutator} $x^{-1}y^{-1}xy$ of $x$ and $y$, and by $[H, K]$ the subgroup generated by all commutators $[x, y]$ with $x \in H$ and $y \in K$, where $H$ and $K$ are subsets of $G$. Clearly, $[H, K]=1$ if and only if $H$ and $K$ commute pointwise. The following  proposition is a basic property of commutators and its proof is straightforward.

\begin{proposition}\rm\label{commutator}
Let $G$ be a group. Then, for any $x, y, z \in G$, $[xy, z]=[x, z]^y[y, z]$ and $[x, yz]=[x, z][x, y]^z$.
\end{proposition}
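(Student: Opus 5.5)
The plan is to verify both identities by direct expansion, using only the definition $[x,y]=x^{-1}y^{-1}xy$ and the convention $a^b=b^{-1}ab$ for conjugation. No earlier result of the paper is needed. In each case I will expand the right-hand side as a word in $x,y,z$ and their inverses, cancel adjacent inverse pairs, and check that what remains is the expansion of the left-hand side.

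For the first identity I expand the right-hand side:
\[
[x,z]^y[y,z]=y^{-1}(x^{-1}z^{-1}xz)y\cdot y^{-1}z^{-1}yz .
\]
The middle pair $y\,y^{-1}$ cancels, and then $z\,z^{-1}$ cancels, leaving $y^{-1}x^{-1}z^{-1}xyz$. Since $(xy)^{-1}=y^{-1}x^{-1}$, this word is exactly $(xy)^{-1}z^{-1}(xy)z=[xy,z]$.

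For the second identity I expand similarly:
\[
[x,z][x,y]^z=x^{-1}z^{-1}xz\cdot z^{-1}(x^{-1}y^{-1}xy)z .
\]
Here $z\,z^{-1}$ cancels, then $x\,x^{-1}$ cancels, leaving $x^{-1}z^{-1}y^{-1}xyz$. Since $(yz)^{-1}=z^{-1}y^{-1}$, this word is $x^{-1}(yz)^{-1}x(yz)=[x,yz]$.

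There is no real obstacle. The only point requiring care is to use the same conventions as the paper: conjugation must be the right action $a^b=b^{-1}ab$, and the commutator must be $x^{-1}y^{-1}xy$. With the left-action convention the identities would take a different form.
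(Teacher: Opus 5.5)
Your proposal is correct and matches the paper's approach: the paper omits the proof as straightforward, and the intended argument is exactly your direct expansion using $[x,y]=x^{-1}y^{-1}xy$ and $a^b=b^{-1}ab$. Both cancellations check out, reducing the right-hand sides to $y^{-1}x^{-1}z^{-1}xyz=[xy,z]$ and $x^{-1}z^{-1}y^{-1}xyz=[x,yz]$ respectively.
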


The {\em commutator} subgroup $G'$ of a group $G$ is the subgroup generated by all commutators $[x, y]$ for any $x, y \in G$. With Proposition~\ref{commutator}, it is easy to prove that if $G$ is generated by a subset $M$, then $G'$ is generated by all conjugates of elements $[x_i, x_j]$ with $x_i, x_j \in M$  in $G$.

\begin{proposition}\rm\label{Derived} (\cite[Hilfsatz \uppercase\expandafter{\romannumeral3}.1.11]{Huppert})
Let $G$ be a group, $S \subseteq G$ and $G = \la S \ra$. Then $G' = \langle [x_i, x_j]^g\ |\ x_i, x_j \in S, g \in G\rangle$.
\end{proposition}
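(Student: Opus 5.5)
The statement to prove is the last one displayed, Proposition~\ref{Derived}: if $G=\la S\ra$, then $G'=\la [x_i,x_j]^g \mid x_i,x_j\in S,\ g\in G\ra$. My plan is a standard normal-closure argument. Let $N=\la [x_i,x_j]^g \mid x_i,x_j\in S,\ g\in G\ra$. Each generator is a commutator, and $G'$ is normal in $G$, so every conjugate of a commutator lies in $G'$; hence $N\le G'$. The generating set of $N$ is closed under conjugation, so $N\unlhd G$.

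For the reverse inclusion, it suffices to show that $G/N$ is abelian, since then $G'\le N$. Write bars for images in $\bar G=G/N$. By construction $[\bar x_i,\bar x_j]=\overline{[x_i,x_j]}=\bar 1$ for all $x_i,x_j\in S$, so the images of elements of $S$ commute pairwise. One small point to record: we may take $S$ to include inverses. This costs nothing, because if $\bar x$ commutes with $\bar y$ then $\bar x^{-1}$ commutes with $\bar y$ as well. Since $\bar G$ is generated by $\bar S$, and a group generated by pairwise commuting elements is abelian, we get $\bar G$ abelian. Therefore $G'\le N$, and so $G'=N$.

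Alternatively, one can expand directly using Proposition~\ref{commutator}. Write $x=s_1\cdots s_k$ and $y=t_1\cdots t_l$ with $s_i,t_j\in S\cup S^{-1}$, and apply the identities $[xy,z]=[x,z]^y[y,z]$ and $[x,yz]=[x,z][x,y]^z$ repeatedly. This reduces $[x,y]$ to a product of conjugates of $[s,t]^{\pm1}$-type terms. To handle inverses one uses the identity $[s^{-1},t]=([s,t]^{s^{-1}})^{-1}$ and its analogue in the second argument.

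No step is a real obstacle. The only care needed is the inverse bookkeeping, which the quotient argument sidesteps entirely.
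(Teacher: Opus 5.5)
Your proof is correct, but your main route is not the one the paper indicates. The paper does not prove this statement; it cites Huppert. Just before the statement, though, it remarks that the result follows easily from the identities $[xy,z]=[x,z]^y[y,z]$ and $[x,yz]=[x,z][x,y]^z$ of Proposition~\ref{commutator}. That is exactly your ``alternative'' sketch: write $x,y$ as words in the generators and, by induction on word length, express $[x,y]$ as a product of conjugates of $[s,t]^{\pm1}$ with $s,t\in S$.

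Your primary argument uses a different idea: $G'$ is the smallest normal subgroup with abelian quotient. $N$ is normal because its generating set is closed under conjugation. $G/N$ is abelian because the images of $S$ commute pairwise, so each $\bar s$ centralizes $\langle \bar S\rangle=\bar G$.

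The two routes buy different things. Yours is shorter and avoids all bookkeeping of inverses and conjugates. The expansion route is more explicit: it writes each commutator of $G$ as a product of the prescribed generators of $N$, using only identities the paper already records.

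Two minor remarks. In the paper's finite setting every element of $\langle S\rangle$ is a positive word in $S$, so the inverse identity $[s^{-1},t]=([s,t]^{s^{-1}})^{-1}$ is not needed there. Similarly, your remark about adjoining inverses to $S$ is harmless but unnecessary, since the centralizer argument already covers it.
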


Let $p$ be a prime. A $p$-group $G$ is called a {\it regular $p$-group} if for any two elements $x$ and $y$ in $G$, there exist $d_1, d_2,\dots,d_r$ in $\la x, y\ra'$ such that $(xy)^p = x^py^p d_1^pd_2^p\dots d_r^p$. The following proposition gives a sufficient condition for a $p$-group to be regular.

\begin{proposition}\rm\label{pgroup} (\cite[\uppercase\expandafter{\romannumeral3}, Theorem 10.2]{Huppert})
Let $G$ be a $p$-group with $p$ an odd prime. If $G'$ is cyclic then $G$ is regular.

\end{proposition}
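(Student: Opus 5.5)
This is the classical result \cite[\uppercase\expandafter{\romannumeral3}, Theorem 10.2]{Huppert}, and I would prove it with the Hall--Petrescu collection formula. First, regularity is a condition on pairs $x,y$ and only involves the subgroup $H=\la x,y\ra$ and its derived subgroup $H'$. Since $H'\le G'$ and subgroups of cyclic groups are cyclic, it suffices to show the following for a single $p$-group $H=\la x,y\ra$ with $p$ odd and $H'$ cyclic: $(xy)^p=x^py^pd^p$ for some $d\in H'$. Because $H'$ is cyclic, its set of $p$-th powers is exactly the subgroup $(H')^p$. So the goal becomes $(xy)^p\equiv x^py^p$ modulo $(H')^p$, where the correction term may be taken to be a single $p$-th power.

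The Hall--Petrescu formula gives
$$(xy)^p=x^py^p\,c_2^{\binom p2}c_3^{\binom p3}\cdots c_{p-1}^{\binom p{p-1}}c_p,\qquad c_i\in\gamma_i(H).$$
For $2\le i\le p-1$ we have $p\mid\binom pi$ and $c_i\in H'$. Since $H'$ is cyclic, hence abelian, $c_i^{\binom pi}=(c_i^{\binom pi/p})^p\in (H')^p$. It remains to handle the last factor $c_p\in\gamma_p(H)$.

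For that factor I would show $\gamma_3(H)=[H',H]\le (H')^p$; this is where $p$ odd is used. Write $H'=\la c\ra\cong \mz_{p^m}$. Conjugation gives a homomorphism from $H$ into $\Aut(H')\cong(\mz/p^m\mz)^*$, and its image is a $p$-subgroup. For $p$ odd, the Sylow $p$-subgroup of $(\mz/p^m\mz)^*$ is $\{a: a\equiv 1 \pmod p\}$. Hence every $h\in H$ acts by $c^h=c^{1+kp}$, so $[c,h]=c^{kp}\in (H')^p$. Since $(H')^p$ is characteristic in $H'$, it is normal in $H$. Then Proposition~\ref{commutator} lets us expand commutators of products, which gives $[H',H]\le (H')^p$. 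As $p\ge3$, we get $\gamma_p(H)\le\gamma_3(H)\le (H')^p$, so $c_p\in(H')^p$.

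All factors after $x^py^p$ therefore lie in the abelian group $(H')^p$, and their product is $d^p$ for a single $d\in H'$. Thus $H$, and hence $G$, is regular. The main obstacle is the step $\gamma_3\le (H')^p$: it needs the structure of the $p$-part of the automorphism group of a cyclic $p$-group, and it fails for $p=2$, since $c\mapsto c^{-1}$ is a $2$-element automorphism. I am taking the Hall--Petrescu formula itself as a standard tool.
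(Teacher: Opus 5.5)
Your proof is correct. The paper gives no argument of its own beyond citing Huppert III.10.2, and your Hall--Petrescu reduction for $H=\la x,y\ra$, namely $\gamma_p(H)\le\gamma_3(H)=[H',H]\le (H')^p$, is the standard proof of that result.

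One correction to your closing remark: the inclusion $[H',H]\le (H')^p$ holds for every prime. When $H'\neq 1$, $[H',H]$ is a proper subgroup of the cyclic $p$-group $H'$, hence lies in $(H')^p$; for $p=2$, inversion gives $[c,h]=c^{-2}\in (H')^2$. So oddness of $p$ is used only in $\gamma_p(H)\le\gamma_3(H)$. For $p=2$ the last factor is an arbitrary $c_2\in H'$ with exponent $\binom{2}{2}=1$, which need not be a square, as $\Q_8$ with $x=i$, $y=j$ shows.
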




\section{Proof of Theorem~\ref{mainth1}\label{S3}}

For a group $G$, write $|G|$ for the order of $G$, and set $o(g)=|\la g\ra|$ for   $g\in G$, called the order of $g$.
For a positive integer $n$, let
\begin{equation}\label{factorizationof2n}
 2n=\prod_{i=1}^{s+1}p_i^{k_i} \mbox{ with } p_1>p_2>\cdots>p_s>p_{s+1}=2,
\end{equation}
the distinct prime factorization of $2n$, where $k_i\geq 1$ is a positive integer for every $1\leq i\leq s+1$ and $p_i$ is an odd prime for every $1\le i\le s$. Note that if $s=0$ then $n$ is a power of $2$. Let $C_{2n}=\la a\ra$ be the cyclic group of order $2n$.
Then we may write
\begin{equation}\label{C_2nfactorization}
  C_{2n}=C_{p_1^{k_1}}\times\cdots\times C_{p_s^{k_s}}\times C_{p_{s+1}^{k_{s+1}}}= \la a_1\ra \times\cdots \times \la a_s\ra\times \la a_{s+1}\ra \mbox{ and } a=a_1\cdots a_sa_{s+1},
\end{equation}
where $o(a_i)=p_i^{k_i}$ for $1\le i\le s+1$. For any $x\in C_{2n}$, we have the decomposition:
\begin{equation}\label{decomposition}
  x=x_1x_2\cdots x_sx_{s+1}, \mbox{ where }x_i\in C_{p_i^{k_i}} \mbox{ for }i=1,2,\cdots,s+1.
\end{equation}
Note that $x_i$ is uniquely determined by $x$ and $o(x_i)$ is a $p_i$-power. We call $x_i$ the {\em $i$-part} of $x$. 
 
Let $n\geq 2$ and let $\Q_{4n}$ be the generalized quaternion group of order $4n$, that is,
\begin{equation}\label{Q4n}
\Q_{4n}=\la a,b\ |\ a^{2n}=1, b^2=a^n, b^{-1}ab=a^{-1}\ra.
\end{equation}
Now $C_{2n}\leq \Q_{4n}$ and $\Q_{4n}=C_{2n}\cup bC_{2n}$.
For $x\in C_{2n}$ and  $\th\in \Aut(C_{2n})$, let $\s_x, \th\in \Aut(\Q_{4n})$ be induced by
\begin{equation}\label{s_ai}
  \s_{x}: a \mapsto a, b\mapsto xb, \mbox{ and } \th: a\mapsto a^\th, b\mapsto b, \mbox{ respectively}.
\end{equation}
Then $\la \s_a\ra\cong C_{2n}$ and $o(\s_{a_i})=o(a_i)=p_i^{k_i}$.  Let $n\geq 3$.
From \cite[Lemma 3.1]{XFK} we have
\begin{equation}\label{autoq_4n}
 \Aut(\Q_{4n})=\la \s_a\ra \rtimes \Aut(C_{2n}),
\end{equation}
where $\la \s_a\ra$ is the kernel of $\Aut(\Q_{4n})$ acting on $C_{2n}$ that is characteristic in $\Q_{4n}$, and $\s_{x}^{\th}=\s_{x^{\th}}$ for all $x\in C_{2n}$ and $\th\in \Aut(C_{2n})$.
From \cite[Theorem 4.7]{Rot} we have
\begin{equation}\label{autoc_2nfactorization}
\Aut(C_{2n})=\Aut(C_{p_1^{k_1}})\times \cdots \times \Aut(C_{p_s^{k_s}})\times \Aut(C_{p_{s+1}^{k_{s+1}}}),
\end{equation}
where $\Aut(C_{p_i^{k_i}})$ is viewed as a subgroup of $\Aut(C_{2n})$ by defining for $\d_i\in \Aut(C_{p_i^{k_i}})$ that $a_j^{\d_i}=a_j$ for all $j\neq i$. Thus,  $\Aut(C_{p_i^{k_i}})$ is a subgroup of $\Aut(\Q_{4n})$.



Let $k_i\geq 2$ for some $1\le i\le s+1$. Let $\a_i\in \Aut(C_{p_i^{k_i}})\leq \Aut(\Q_{4n})$, induced by
\begin{equation}\label{auorderp_i}
\a_i: a_i\mapsto a_i^{p_i^{k_i-1}+1}, a_j\mapsto a_j \mbox{ and } b\mapsto b \mbox{ for every } j\neq i.
\end{equation}
Then $\a_i$ has order $p_i$. Let $$\b_i=\s_{a_i}^{p_i^{k_i-1}}.$$ Then $\b_i$ also has order $p_i$  for every $1\le i\le s+1$, which is induced by
\begin{equation}\label{auorderp_i2}
\b_i: a\mapsto a \mbox{ and } b\mapsto a_i^{p_i^{k_i-1}}b.
\end{equation}

Note that $k_j\geq 1$ for all $1\le j\le s+1$. Let $\e_j\in \Aut(C_{p_j^{k_j}})\leq \Aut(\Q_{4n})$ be induced by
\begin{equation}\label{involutions}
    \e_j: a_j\mapsto a_j^{-1}, b\mapsto b, \mbox{ and } a_i\mapsto a_i \mbox{ for }  \mbox{ every } 1\le i\le s+1 \mbox { with } i\not=j.
\end{equation}
Then $\e_j$ is an involution for every $1\leq j\leq s$. Furthermore, $\e_{s+1}=1$ for $k_{s+1}=1$, and  $\e_{s+1}$ is an involution for $k_{s+1}\geq 2$. Set \begin{equation}\label{epson}
    \e=\e_1\e_2\cdots\e_{s+1}.
\end{equation}
Then $x^\e=x^{-1}$ and $(xb)^\e=x^{-1}b$ for any $x\in C_{2n}$.

Let $k_{s+1}\ge 3$. Recall that $p_{s+1}=2$. Then $\Aut(C_{p_{s+1}^{k_{s+1}}})\cong C_2\times C_{2^{k_{s+1}-2}}$, and $\Aut(C_{p_{s+1}^{k_{s+1}}})$ has exactly three involutions, which are also  automorphisms of $\Q_{4n}$. These involutions are $\a_{s+1}$, $\e_{s+1}$ and $\a_{s+1}\e_{s+1}$.



\medskip 
For a digraph (resp. a graph) $\Ga$  and two subsets $X$ and $Y$ of $V(\Ga)$, denote by $[X]_{\Ga}$ the induced subdigraph (resp. a subgraph) of $X$ in $\Ga$, and if there is no confusion, we delete the subscript $\Ga$, that is, $[X]_{\Ga}=[X]$. Denote by $[X,Y]$ the digraph (resp. graph) with vertex set $X\cup Y$ and arc set $\{(x,y)\ |\ x\in X, y\in Y, (x,y)\in A(\Ga)\}$ (resp. edge set $\{\{x,y\}\ |\ x\in X, y\in Y, \{x,y\}\in E(\Ga)\}$). When $X$ and $Y$ are disjoint, $[X,Y]$ is a bipartite graph if $\Ga$ is a graph, called {\em the induced bipartite subgraph between $X$ and $Y$}, and is a bipartite digraph with arcs from $X$ to $Y$ if $\Ga$ is a digraph, called {\em the induced bipartite subdigraph from $X$ to $Y$}. For a group $G$ and a subgroup $H$ of $G$, denote by $[G:H]$ the set of right cosets of $H$ in $G$. The following lemma provides some sufficient conditions for a Cayley digraph of $\Q_{4n}$ to be non-normal. 

\medskip
\begin{lemma}\rm\label{lm1}
 Let $n\geq 2$ and let $\Ga=\Cay(\Q_{4n},S)$ be a Cayley digraph of the generalized quaternion group $\Q_{4n}$. Then we have the following:
 \begin{itemize}
    \item[(1)] If $\langle \b_i \rangle\a_i\cap \Aut(\Q_{4n},S)\not=\emptyset$ for some $1\le i\le s$, then $\Ga$ is non-normal;
    \item[(2)] Let $1\not=\a\in \Aut(C_{2n})\cap \Aut(\Q_{4n},S)$ and $1\not=\s_{a^r}\in \Aut(\Q_{4n},S)$. If $\a$ fixes $[C_{2n}:\la a^r\ra]$ pointwise, then $\Ga$ is non-normal. In particular, if $\Ga$ is a graph and $\la\s_a\ra\a_{s+1}\cap \Aut(\Q_{4n},S)\not=\emptyset$, then $\Ga$ is non-normal.
  \item[(3)] Let $\Ga$ be a graph and $n>2$. If $\la\s_a\ra\e\a_{s+1}\cap \Aut(\Q_{4n},S)\not=\emptyset$, then $\Ga$ is non-normal.
  \end{itemize}
\end{lemma}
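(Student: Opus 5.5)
The plan is to derive all three parts from Proposition~\ref{MainPro}, each time choosing a subgroup $L\le \Aut(\Q_{4n},S)$ and a normal subgroup $K$ with the right coset/orbit behaviour. Throughout we use that every subgroup of $C_{2n}=\la a\ra$ is characteristic in $C_{2n}$, hence normal in $\Q_{4n}$. We also use that $\Aut(\Q_{4n})$ acts on $C_{2n}$ and $bC_{2n}$ by the formulas (\ref{s_ai})--(\ref{involutions}).

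For (1), let $\g=\b_i^j\a_i\in\Aut(\Q_{4n},S)$, put $L=\la\g\ra$ and $K=\la a_i^{p_i^{k_i-1}}\ra\cong C_{p_i}$.
\begin{itemize}
\item Direct computation gives $x^\g=x\cdot x_i^{p_i^{k_i-1}}$ and $(xb)^\g=x^{\a_i}a_i^{jp_i^{k_i-1}}b$. Hence $g^{-1}g^{\g}\in K$ for every $g\in\Q_{4n}$, and $\g$ acts on each coset $Kg$ as a translation by an element of $K$.
\item Since $\g$ commutes with translations by $K$ and $o(\g)=p_i$ is prime, each coset $Kg$ is either fixed pointwise by $L$ or is a single $L$-orbit of size $p_i$.
\item The map $g\mapsto g^{-1}g^\g$ is a homomorphism on $C_{2n}$, so $F_{\Q_{4n}}(L)$ has index $p_i$ in $\Q_{4n}$; the coset $bC_{2n}$ contributes at most one coset of the fixed subgroup of $C_{2n}$.
\end{itemize}
Since $p_i\ge3>2$, condition (1) of Proposition~\ref{MainPro} applies and $\Ga$ is non-normal.

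For the first claim of (2), take $L=\la\s_{a^r}\ra$ and $K=\la a^r\ra$. Then $L$ fixes $C_{2n}$ pointwise. It sends $xb\mapsto xa^{r}b$ (up to the relation $ba^r=a^{-r}b$), so every coset $Kxb$ is an $L$-orbit, and $F_{\Q_{4n}}(L)=C_{2n}$ has index $2$. Now take $\g=\a$ in condition (3) of Proposition~\ref{MainPro}.
\begin{itemize}
\item $\a$ fixes $b$, so $F(\la\a\ra)\ne C_{2n}$.
\item By hypothesis $x^\a\in Kx$ for all $x\in C_{2n}$, so $\a$ fixes the cosets of $K$ in $C_{2n}$ setwise.
\item It maps $Kxb$ to $Kx^\a b=Kxb$, so it also fixes the cosets of $K$ in $bC_{2n}$ setwise.
\end{itemize}

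For the "in particular" of (2), first observe that for a graph $S=S^{-1}$ and $(xb)^{-1}=xa^nb$. Hence $\s_{a^n}\in\Aut(\Q_{4n},S)$ always. Moreover $\a_{s+1}$ satisfies $x^{\a_{s+1}}=x\cdot(a^n)^{m}$ for some $m$, so $\a_{s+1}$ fixes $[C_{2n}:\la a^n\ra]$ pointwise. If the given element is $g=\s_{a^t}\a_{s+1}$ with $a^t\in\la a^n\ra$, we may multiply by $\s_{a^n}$ to get $\a_{s+1}\in\Aut(\Q_{4n},S)$ and apply the first claim with $r=n$.

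The main obstacle is general $t$. Here I would instead work with $H=\Aut(\Q_{4n},S)$ and $L=H\cap\la\s_a\ra$. Note that $g^2=\s_{a^{t}a^{t\a_{s+1}}}=\s_{a^{2t+nt}}$ and $\s_{a^n}$ both lie in $L$, so $L=\la\s_{a^{r}}\ra$ with $a^t\in\la a^{r}\ra$ up to an element of $\la a^n\ra$. I would then rerun the argument of the first claim with $K=\la a^r\ra$ and $\g=g$. The check is that $(xb)^g=x^{\a_{s+1}}a^tb\in Kxb$, because $a^t\in K\la a^n\ra$, and $\la a^n\ra\le K$ since $\s_{a^n}\in L$. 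An alternative, if the bookkeeping fails, is to conjugate $S$ by a suitable $\s_y$ to normalise $t$; this is harmless since normality is invariant under $\Aut(\Q_{4n})$.

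For (3), I would follow the same template with $\g=\s_{a^t}\e\a_{s+1}$, $L=\Aut(\Q_{4n},S)\cap\la\s_a\ra\ni\s_{a^n}$ and $K$ generated by the corresponding $a^r$. This time I would aim for condition (2) or (3) of Proposition~\ref{MainPro}. Now $\e\a_{s+1}$ sends $x\mapsto x^{-1}(a^n)^m$, so cosets of $\la a^n\ra$ are not fixed in general. I would therefore use condition (2) with $k\in K$ and $g\in bC_{2n}$, or note that $F(\la\g\ra)\ne C_{2n}$ whenever $\g\notin\la\s_a\ra$. The hypothesis $n>2$ is needed so that $\e\a_{s+1}$ is nontrivial and distinct from the $\s$-part, and so that some $k$ with $k^g\ne k^{-1}$ or a suitable non-fixed coset exists. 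As in (2), I expect the delicate step to be choosing $K$ so that the $L$-orbits on $bC_{2n}$ are exactly the $K$-cosets.
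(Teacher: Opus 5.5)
Your part (1) is the paper's argument. Your proof of the first claim of (2), via condition (3) of Proposition~\ref{MainPro} with $L=\la\s_{a^r}\ra$, $K=\la a^r\ra$ and $\g=\a$, is correct but takes a different route: the paper instead extends $\a|_{C_{2n}}$ by the identity on $C_{2n}b$ and contradicts Proposition~\ref{normal}. Because that extension uses only how the automorphism acts on $C_{2n}$, the paper gets the ``in particular'' for every element of $\la\s_a\ra\a_{s+1}$ at once. Your version does not, and that is where the first gap lies.

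\textbf{(2), general $t$.} From $g^2=\s_{a^{2t+nt}}\in L$ and $\s_{a^n}\in L$ you only obtain $a^{2t}\in\la a^r\ra$. These two facts are consistent with $L=\la\s_{a^2}\ra$ and $t$ odd. So the claim $a^t\in\la a^r\ra\la a^n\ra$, and with it $(xb)^g\in Kxb$, is unsupported. The fallback also fails: $\s_y^{-1}(\s_{a^t}\a_{s+1})\s_y=\s_{a^ty^{-1}y^{\a_{s+1}}}\a_{s+1}$ with $y^{-1}y^{\a_{s+1}}\in\la a^n\ra$, so conjugating by $\s_y$ only moves $a^t$ within $a^t\la a^n\ra$. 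What works is the following. Since $\s_{a^n}\in\Aut(\Q_{4n},S)$, the set $S\cap C_{2n}b$ is a union of $\la a^n\ra$-cosets, and $g$ maps $xb$ into $xa^t\la a^n\ra b$. Hence $\s_{a^t}$, which fixes $C_{2n}$ pointwise, also preserves $S$. So $\s_{a^t}\in\Aut(\Q_{4n},S)$, whence $\a_{s+1}=\s_{a^t}^{-1}g\in\Aut(\Q_{4n},S)$ and your $t=0$ case applies.

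\textbf{(3).} Here the plan genuinely fails. Suppose $F_{\Q_{4n}}(L)=C_{2n}$. Then $L\le\la\s_a\ra$, and any admissible $K$ lies in $C_{2n}$; otherwise $KC_{2n}=\Q_{4n}$ and every $K$-coset contains both fixed and moved points. Every element of $C_{2n}b$ inverts every element of $C_{2n}$, so condition (2) of Proposition~\ref{MainPro} can never hold.

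For condition (3) with $\g=\s_{a^t}\e\a_{s+1}$, you need $\g$ to fix every $K$-coset. But $\g$ acts on $C_{2n}$ as inversion modulo $\la a^n\ra$, so already $(Ka^2)^{\g}=Ka^{-2}$ forces $a^4\in K=\la a^r\ra$. That means $\s_{a^4}\in\Aut(\Q_{4n},S)$, which the hypothesis does not provide.

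The missing idea is to use $S=S^{-1}$ to peel off $\a_{s+1}$ exactly as above:
\begin{itemize}
\item $\s_{a^t}\e$ inverts $C_{2n}$, so it preserves $S\cap C_{2n}$.
\item On $C_{2n}b$ it agrees with $\g$ modulo $\la a^n\ra$, since both send $xb$ into $x^{-1}a^{-t}\la a^n\ra b$. So it preserves $S\cap C_{2n}b$.
\end{itemize}
Thus $\s_{a^t}\e\in\Aut(\Q_{4n},S)$, hence $\a_{s+1}=(\s_{a^t}\e)^{-1}\g\in\Aut(\Q_{4n},S)$, and the first claim of (2) with $r=n$ finishes.

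The paper argues directly instead. It shows $S\cap\la a^2\ra a$ is closed under multiplication by $a^n$. It then builds the nontrivial automorphism acting as $R(a^n)$ on $\la a^2\ra a$ and trivially elsewhere, which fixes the generating set $C_{2n}b$ pointwise.
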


\proof To prove (1), let $\langle \b_i \rangle\a_i\cap \Aut(\Q_{4n},S)\not=\emptyset$ for some $1\le i\le s$. Since both $\a_i$ and $\b_i$ have order $p_i$, we have $k_i\geq 2$ and $\b_i^r\a_i\in \Aut(\Q_{4n},S)$ for some $0\leq r< p_i$. Write  $\Lambda=\la \b_i^r\a_i \ra\leq \Aut(\Q_{4n},S)$.
Recall that $\b_i=\s_{a_i}^{p_i^{k_i-1}}$. Then $\langle\b_i\rangle$ is the unique subgroup of order $p_i$ in $\langle\s_a\rangle$ and hence characteristic in $\langle \s_a\rangle$. This, together with Eq~(\ref{autoq_4n}), implies that $\langle \b_i\rangle\unlhd \Aut(\Q_{4n})$, and hence $\langle \b_i\rangle \langle \a_i\rangle\leq \Aut(\Q_{4n})$. Thus, $\langle \b_i\rangle \langle \a_i\rangle$ is an elementary abelian $p_i$-group of order $p_i^2$. It follows that $|\Lambda|=p_i$.

Since $k_i\geq 2$, $C_{2n}=\langle a\rangle$ is the unique subgroup of order $2n$ in $\Q_{4n}$ and so characteristic in $\Q_{4n}$. Set $H=\la a_1,\cdots, a_{i-1},a_{i+1},\cdots ,a_s,a_{s+1}\ra$, $K=\la a_i^{p_i^{k_i-1}}\ra$ and $L=\la a_1,\cdots, a_{i-1},a_i^{p_i},a_{i+1},\cdots ,a_s,a_{s+1}\ra$. Since $C_{2n}$ is cyclic, each of $K$, $H$ and $L$ is characteristic in $\Q_{4n}$, and hence normal in $\Q_{4n}$. Since  $k_i\geq 2$, we have $K\leq L$. Furthermore, $|H|=2n/p_i^{k_i}$, $|K|=p_i$ and $|L|=2n/p_i$.

Let $F_{\Q_{4n}}(\Lambda)$ be the set of fixed-points of $\Lambda$ in $\Q_{4n}$. From  Eqs~(\ref{auorderp_i}) and (\ref{auorderp_i2}) it is easy to see that $L=\la a_1,\cdots,a_i^{p_i},\cdots,a_s,a_{s+1}\ra\leq F_{\Q_{4n}}(\Lambda)$.  Furthermore,
$(a_i^{-r}b)^{\b_i^r\a_i}=(a_i^{-r}a_i^{rp_i^{k_i-1}}b)^{\a_i}=
a_i^{-r(p_i^{k_i-1}+1)}a_i^{rp_i^{k_i-1}}b=a_i^{-r}b$, that is, $a_i^{-r}b\in F_{\Q_{4n}}(\Lambda)$. Since $L\unlhd \Q_{4n}$ and $(a_i^{-r}b)^2=b^2=a^n\in L$, we have $L\langle a_i^{-r}b \rangle\leq \Q_{4n}$ and $|L\langle a_i^{-r}b\rangle|=2|L|=4n/p_i$. Then $|\Q_{4n}:L\langle a_i^{-r}b \rangle|=p_i$, and since $p_i$ is a prime and $F_{\Q_{4n}}(\Lambda)\not=\Q_{4n}$, we have $F_{\Q_{4n}}(\Lambda)=L\langle a_i^{-r}b \rangle$.

Note that $|\Q_{4n}:F_{\Q_{4n}}(\Lambda)|=p_i>2$. Let $T=L\langle a_i^{-r}b \rangle=F_{\Q_{4n}}(\Lambda)$. Since $a_i\not\in T$, we have  $\Q_{4n}=\bigcup_{j=0}^{p_i-1}Ta_i^j=\bigcup_{x\in T}\bigcup_{j=0}^{p_i-1}Ka_i^jx$. Noting that $a_i^{\b_i^r\a_i}=a_i^{\a_i}=a_i^{p_i^{k_i-1}}a_i\in Ka_i$, $\Lambda$ fixes every coset $Ka_i^jx$ setwise, and since $|\Lambda|=p_i=|Ka_i^jx|$, $\Lambda$ is either transitive on $Ka_i^jx$, or fixes $Ka_i^jx$ pointwise. By Proposition~\ref{MainPro}(1), $\Ga$ is non-normal.

\medskip
To prove (2), assume that $\a$ fixes $[C_{2n}:\la a^r\ra]$ pointwise. Denote by $[C_{2n}b:\la a^r\ra]$ the set of right cosets of $\la a^r\ra$ in $C_{2n}b$. By Eq~(\ref{s_ai}), $[C_{2n}b:\la a^r\ra]$ is the orbit-set of $\la \s_{a^r}\ra$ on $C_{2n}b$, and since $\s_{a^r}\in \Aut(\Q_{4n},S)\leq \Aut(\Ga)$, for every coset $\la a^r\ra y$ with $y\in C_{2n}b$ we have that either $1$ has no out-neighbor in $\la a^r\ra y$, or $1$ is adjacent to every vertex in $\la a^r\ra y$. Let $m=|\la a^r\ra|\not=1$. Since $\la R(a^r)\ra\leq \Aut(\Ga)$, the bipartite induced subdigraph $[\la a^r\ra, \la a^r\ra y]$ has no out-neighbor or is a complete bipartite digraph with arcs from $\la a^r\ra$ to $\la a^r\ra y$, that is, $[\la a^r\ra, \la a^r\ra y]=2mK_1$ or $\vec{K}_{m,m}$. Similarly, $[\la a^r\ra y,\la a^r\ra]=2mK_1$ or $\vec{K}_{m,m}$. Since $\la R(a)\ra\leq \Aut(\Ga)$, for all $x\in C_{2n}$ and $y\in C_{2n}b$, we have $[\la a^r\ra x, \la a^r\ra y]=2mK_1$ or $\vec{K}_{m,m}$, and $[\la a^r\ra y,\la a^r\ra x]=2mK_1$ or $\vec{K}_{m,m}$. This implies that any automorphism of the induced subdigraph $[C_{2n}]$ of $\Gamma$ fixing $[C_{2n}:\la a^r\ra]$ pointwise, can be extended to an automorphism of $\Ga$ by fixing $C_{2n}b$ pointwise. Recall that $\a$ fixes $[C_{2n}:\la a^r\ra]$ pointwise. Since $\a\in \Aut(C_{2n})\cap \Aut(\Q_{4n},S)\leq \Aut(\Ga)$, the restriction of $\a$ on $C_{2n}$ is an automorphism of the induced subdigraph $[C_{2n}]$ of $\Gamma$ fixing $[C_{2n}:\la a^r\ra]$ pointwise, the restriction can be extended to an automorphism, say $\a'$, of $\Ga$ such that $x^{\a'}=x^\a$ for $x\in C_{2n}$ and $y^{\a'}=y$ for $y\in C_{2n}b$. Since $1\not=\a\in \Aut(C_{2n})\cap \Aut(\Q_{4n},S)$, $\a$ has a non-trivial action on $C_{2n}$, implying $\a'\not=1$.

Suppose $\Ga$ is normal. By Proposition~\ref{normal}, $\a'\in A_1=\Aut(G,S)$, and since $\a'$ fixes  $C_{2n}b$ pointwise, it fixes $\Q_{4n}=\la C_{2n}b\ra$ pointwise. It follows that $\a'=1$, a contradiction. Thus, $\Ga$ is non-normal.

Now assume that $\Ga$ is a graph and $\la\s_a\ra\a_{s+1}\cap \Aut(\Q_{4n},S)\not=\emptyset$. Then $S=S^{-1}$. Clearly, $(a^ib)^{\s_{a^n}}=a^{n+i}b=(a^ib)^{-1}$ for any $a^ib\in C_{2n}b$ and $\s_{a^n}$ fixes $C_{2n}$ pointwise. Thus, $S^{\s_{a^n}}=S$, that is, $\s_{a^n}\in \Aut(\Q_{4n},S)$. Note that $a^n=a_{s+1}^{2^{k_{s+1}-1}}$ because both are the unique involution in $\la a\ra$. By Eqs~(\ref{auorderp_i}) and (\ref{s_ai}), $\la\s_a\ra\a_{s+1}\cap \Aut(\Q_{4n},S)$ fixes $[C_{2n}:\la a^n\ra]$ pointwise, and since $\la\s_a\ra\a_{s+1}\cap \Aut(\Q_{4n},S)\not=\emptyset$, the above proof implies that $\Ga$ is non-normal.  
\medskip

To prove (3), let $\gamma\in\la \s_a\ra \e\a_{s+1}\cap \Aut(\Q_{4n},S)\neq \emptyset$. Since $\a_{s+1}$ has order $2$, we have $k_{s+1}\geq 2$. By Eqs~(\ref{involutions}) and (\ref{epson}), $a_i^{\g}=a_i^{\e\a_{s+1}}=a_i^{\e_i}=a_i^{-1}$ for $1\le i\le s$, and $a_{s+1}^{\g}=a_{s+1}^{\e\a_{s+1}}=a_{s+1}^{\e_{s+1}\a_{s+1}}=a_{s+1}^{2^{k_{s+1}-1}-1}=a_{s+1}^{-1}a^n$. 
For $x\in\la a\ra$, by Eq~(\ref{decomposition}) we may write $x=x_1\cdots x_sx_{s+1}$, where $x_i$ is the $i$-part of $x$ and $x_i\in\la a_i\ra$. Then we have 
$$x^{\gamma}=x^{-1} \mbox{ if } o(x_{s+1})\neq 2^{k_{s+1}}, \mbox{ and } x^{\gamma}=x^{-1}a^n \mbox{ if } o(x_{s+1})= 2^{k_{s+1}}.$$ 
It follows that $x^\g=x^{-1}$ for any $x\in \la a^2\ra$ and $y^\g=y^{-1}a^n$ for any $y\in \la a^2\ra a$.  

Let $M=\la a^n\ra$, the unique subgroup of order $2$ in $C_{2n}$. Then $M\leq \la a^2\ra$ as $k_{s+1}\geq 2$. Take any $x\in \la a^2\ra$ and $y\in \la a^2\ra a$. We claim that $[Mx, My]$ is either the empty graph $4K_1$ of order $4$ or the complete bipartite graph $K_{2,2}$ of order $4$.  

Take $z\in \la a^2\ra a$. Then $o(z)=2n$. Assume that $[M, Mz]$ has an edge. Note that $R(M)\leq \Aut(\Ga)$ interchanges the two vertices in every coset of $M$ in $C_{2n}$. Thus, we may assume that $\{1,z\}$ is an edge, that is, $z\in S$. Since $S=S^{-1}$, we have $z^{-1}\in S$. Note that $Mz\subseteq \la a^2\ra a$ and  $Mz^{-1}\subseteq \la a^2\ra a$. Since $o(z)=2n$ and $n>2$, we have $Mz\not=Mz^{-1}$. Furthermore, $z^{\g}=z^{-1}a^n\in Mz^{-1}$, and $\{1,z^{-1}a^n\}$ is an edge, that is $z^{-1}a^n\in S$. Thus, $za^n=(z^{-1}a^n)^{-1}\in S$, and hence $1$ is adjacent to every vertex in $Mz$. Since $R(M)\leq \Aut(\Ga)$, we have $[M,Mz]=K_{2,2}$. It follows that $[M, Mz]=4K_1$ or $K_{2,2}$. Since $R(\la a^2\ra)\leq \Aut(\Ga)$, the arbitrariness of $z$ implies that $[Mx,My]=4K_1$ or $K_{2,2}$ for any $x\in \la a^2\ra$ and $y\in \la a^2\ra a$, as claimed. 

Since $\Ga$ is a graph, $\s_{a^n}\in \Aut(\Q_{4n},S)$, and hence $[My,Mw]=4K_1$ or $K_{2,2}$ for any $y\in \la a^2\ra a$ and $w\in  C_{2n}b$. This, together with the claim above, implies that every automorphism of the induced subgraph $[\la a^2\ra a]$ fixing every coset of $M$ in   
$\la a^2\ra a$ can be extended to an automorphism of $\Ga$ by fixing every vertex in $\la a^2\ra\cup C_{2n}b$. In particular, $R(a^n)$ fixes every coset of $M$ in $\Q_{4n}$, and its restriction on $\la a^2\ra a$ can be extended to an automorphism $\rho$ of $\Ga$ such that $x^\rho=xa^n$ for $x\in \la a^2\ra a$ and $x^\rho=x$ for $x\not\in \la a^2\ra a$. Clearly, $\rho\not=1$.   

Suppose that $\Ga$ is normal. Then $\rho\in A_1=\Aut(\Q_{4n},S)$ by Proposition~\ref{normal}. Since $\rho$ fixes $\Q_{4n}\setminus \la a^2\ra a$ pointwise, it fixes $\Q_{4n}=\la C_{2n}b\ra$ pointwise, implying  $\rho=1$, a contradiction. Thus, $\Ga$ is non-normal. This completes the proof.\qed

\medskip

For a group $G$ and a prime $p$, denote by $G_p$ a Sylow $p$-subgroup of $G$. Recall that $R(G)=\{R(g)\mid g\in G\}$, where $R(g)$ is the right multiplication map of $g$ on $G$. Then $R(G)$ is a regular permutation subgroup of the symmetric group $\Sym(G)$ on $G$, and the automorphism group $\Aut(G)$ of $G$ is also a subgroup of $\Sym(G)$. The normalizer of $R(G)$ in the symmetric group $\Sym(G)$ is called the {\em holomorph} of $G$, denoted by $\Hol(G)$. By \cite[Lemma 7.16]{Rot}, $\Hol(G)=R(G)\rtimes\Aut(G)$. Thus, for any $T\leq \Aut(G)$, we have the semidirect product $R(G)\rtimes T$. It is easy to see that 
\begin{equation}\label{add}
R(g)^\a=R(g^\a) \mbox{ for all } g\in G \mbox{ and } \a\in \Aut(G).
\end{equation}

Let $n\geq 3$. By Eq~(\ref{autoq_4n}) and \cite[Lemma~3.1]{XFK}, we have 
\begin{equation}\label{holQ_4n1} \Hol(\Q_{4n})=R(\Q_{4n})\rtimes\Aut(\Q_{4n})=R(\Q_{4n})\rtimes(\la \s_a\ra \rtimes \Aut(C_{2n})).
\end{equation}

Recall that for all $1\le i,j\le s+1$ with $i\neq j$, $\Aut(C_{p_i^{k_i}})$ acts trivially on $C_{p_j^{k_j}}$.
Thus,
\begin{equation}\label{commute}
    [\Aut(C_{p_i^{k_i}}),\Aut(C_{p_j^{k_j}})]=1 \mbox{ and } [\Aut(C_{p_i^{k_i}}), \la \s_{a_j}\ra]=1, \mbox{ for all } i\neq j,
\end{equation}
that is, $\Aut(C_{p_i^{k_i}})$ commutes with $\Aut(C_{p_j^{k_j}})$ and $ \la \s_{a_j}\ra$ pointwise.
By Eq~(\ref{autoc_2nfactorization}), we have
\begin{equation}\label{holQ_4n2}
\Aut(\Q_{4n})=(\la \s_{a_1}\ra \rtimes \Aut(C_{p_1^{k_1}}))\times \cdots \times (\la \s_{a_s}\ra \rtimes \Aut(C_{p_s^{k_s}}))\times (\la \s_{a_{s+1}}\ra\rtimes \Aut(C_{p_{s+1}^{k_{s+1}}})).
\end{equation}

Let $c\in C_{2n}$ and $z\in C_{2n}b$. Then $\s_c^{-1}R(z)\s_c=R(z)^{\s_c}=R(z^{\s_c})=R(cz)=R(c)R(z)$, that is, $\s_cR(z)=R(c^{-1}z)\s_c$.
It follows that
\begin{equation}\label{s^R}
\s_c^{R(z)}=R(c)\s_c, \mbox{ for all } c\in C_{2n} \mbox{ and } z\in C_{2n}b.
\end{equation}

\medskip

In the remainder of this section, we use the notions and formulas in Eqs~(\ref{factorizationof2n})-(\ref{s^R}). 

\medskip
We first consider Sylow $p_i$-subgroups of a  regular subgroup of $\Aut(\Ga)$ isomorphic to the generalized quaternion group $\Q_{4n}$, where $\Ga=\Cay(\Q_{4n},S)$ is a normal Cayley graph.

\medskip
\begin{lemma}\rm\label{Hp_i1-s}
Let $\Ga=\Cay(\Q_{4n},S)$ be a normal Cayley graph of the generalized quaternion group $\Q_{4n}$ ($n\geq 2$) and let $H$ be a regular subgroup of $\Aut(\Ga)$ isomorphic to $\Q_{4n}$. Let $1\le i\le s$. Then $H_{p_i}=\la R(a_i)\s_{a_i}^{r_i}\ra$ for some integer $r_i$.
\end{lemma}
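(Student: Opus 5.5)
Since $\Ga$ is normal, Proposition~\ref{normal} gives $\Aut(\Ga)=R(\Q_{4n})\rtimes\Aut(\Q_{4n},S)\le\Hol(\Q_{4n})$. So I would locate $H_{p_i}$ inside a Sylow $p_i$-subgroup of $\Hol(\Q_{4n})$, which by Eq~(\ref{holQ_4n1}) has a very explicit form. (For $n=2$ there is no odd $i$, so I assume $n\ge3$, which makes Eqs~(\ref{autoq_4n})--(\ref{s^R}) available.)

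\textbf{Step 1: the normal $p_i$-subgroup $N_i$.} Put $N_i=R(\la a_i\ra)\la\s_{a_i}\ra$. The subgroup $R(\la a_i\ra)$ is characteristic in $R(\Q_{4n})$, and $\la\s_{a_i}\ra$ is characteristic in $\la\s_a\ra\unlhd\Aut(\Q_{4n})$. By Eq~(\ref{s^R}), $\s_{a_i}^{R(z)}=R(a_i)\s_{a_i}$ for $z\in C_{2n}b$, and $R(C_{2n})$ centralizes $\la\s_{a_i}\ra$ because $\s_{a_i}$ fixes $C_{2n}$ pointwise. By Eq~(\ref{add}), $\Aut(\Q_{4n})$ normalizes $R(\la a_i\ra)$. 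Together these show $N_i\unlhd\Hol(\Q_{4n})$. Moreover $R(a_i)$ and $\s_{a_i}$ commute, since $a_i^{\s_{a_i}}=a_i$, so $N_i\cong C_{p_i^{k_i}}\times C_{p_i^{k_i}}$.

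\textbf{Step 2: the Sylow $p_i$-subgroup of $\Hol(\Q_{4n})$.} The group $\Aut(C_{2n})$ is abelian, so by Eq~(\ref{autoc_2nfactorization}) it has a unique Sylow $p_i$-subgroup $P_i\le\Aut(C_{p_i^{k_i}})$, cyclic of order $p_i^{k_i-1}$. Using Eq~(\ref{holQ_4n2}), the unique Sylow $p_i$-subgroup of $\Hol(\Q_{4n})$ is $N_iP_i$. Hence $H_{p_i}\le N_iP_i\cap\Aut(\Ga)$, and $N_iP_i\cap\Aut(\Ga)=R(\la a_i\ra)\bigl(\la\s_{a_i}\ra P_i\cap\Aut(\Q_{4n},S)\bigr)$.

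\textbf{Step 3 (main obstacle): $H_{p_i}\le N_i$.} Suppose not. Then $\Aut(\Q_{4n},S)$ contains some $\gamma\in\la\s_{a_i}\ra P_i\setminus\la\s_{a_i}\ra$. Replacing $\gamma$ by a suitable power, its image in $P_i$ has order $p_i$, so $\gamma=\s_{a_i}^v\a_i^j$ with $p_i\nmid j$. I then want to push $\gamma$ into a coset $\la\b_i\ra\a_i^{j'}$ and invoke Lemma~\ref{lm1}(1).
\begin{itemize}
\item A direct computation gives $(\s_{a_i}^v\a_i^j)^{p_i}=\s_{a_i}^{vp_i}$, and conjugation by $\la\s_{a_i}\ra$ only alters $v$ by multiples of $p_i^{k_i-1}$. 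So the reduction is immediate when $p_i^{k_i-1}\mid v$.
\item When $p_i^{k_i-1}\nmid v$, I would bring in regularity of $H$. An element of $H_{p_i}$ whose $P_i$-component is nontrivial has the form $R(a_i^u)\s_{a_i}^{v'}\a$. I would compute its powers and its fixed points on $C_{2n}$ and on $C_{2n}b$, and use semiregularity of $H_{p_i}$ to show that such an element cannot have order $p_i^{k_i}$.
\item If that computation does not settle it, I would combine elements of $\Aut(\Q_{4n},S)$ (the element $\gamma$ together with $\la\s_{a_i}\ra\cap\Aut(\Q_{4n},S)$) to produce an element lying in $\la\b_i\ra\a_i$.
\end{itemize}
This is the step I expect to be delicate. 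The order-$p_i$ element with nonzero $P_i$-component need not a priori lie in a $\la\b_i\ra$-coset, so the argument must genuinely use that $H$ is regular and that $H\cong\Q_{4n}$ forces $H_{p_i}$ to be cyclic of order $p_i^{k_i}$.

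\textbf{Step 4: conclusion.} Once $H_{p_i}\le N_i$, the group $H_{p_i}$ is a cyclic subgroup of order $p_i^{k_i}$ of $N_i=\{R(a_i^u)\s_{a_i}^v\}$. Let $R(a_i^u)\s_{a_i}^v$ be a generator.
\begin{itemize}
\item If $p_i\mid u$, then both $R(a_i^u)$ and $\s_{a_i}^v$ act on $C_{2n}$ with order less than $p_i^{k_i}$, since $\s_{a_i}$ is trivial on $C_{2n}$. Then a nontrivial power of the generator fixes points of $C_{2n}$, contradicting semiregularity of $H_{p_i}$.
\item So $p_i\nmid u$. Because $R(a_i)$ and $\s_{a_i}$ commute, $(R(a_i^u)\s_{a_i}^v)^m=R(a_i^{um})\s_{a_i}^{vm}$. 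Choosing $m$ with $um\equiv1\pmod{p_i^{k_i}}$ gives the generator $R(a_i)\s_{a_i}^{r_i}$ with $r_i=vm$.
\end{itemize}
Hence $H_{p_i}=\la R(a_i)\s_{a_i}^{r_i}\ra$, as claimed.
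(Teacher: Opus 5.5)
Steps 1 and 4 are sound; Step 4 is essentially the paper's closing argument for $i=1$. There are genuine gaps in Steps 2 and 3.

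\textbf{Step 2 fails for $i\ge 2$.} The Sylow $p_i$-subgroup of the abelian group $\Aut(C_{2n})$ is the product of the $p_i$-parts of \emph{all} the factors $\Aut(C_{p_j^{k_j}})\cong C_{p_j^{k_j-1}(p_j-1)}$. For $j<i$ (so $p_j>p_i$) that part is nontrivial as soon as $p_i\mid p_j-1$. For example, take $n=21$ and $p_i=p_2=3$. The order-$3$ automorphism $\theta$ of $\la a_1\ra\cong C_7$ is a $3$-element of $\Aut(\Q_{84})$ lying outside $N_2P_2=N_2$. Normality of $\Ga$ gives no reason to exclude $\theta\in\Aut(\Q_{4n},S)$, and in that case $R(a_2)\theta$ is a semiregular element of order $3$ in $\Aut(\Ga)$ outside $N_2$. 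So $H_{p_i}\le N_iP_i$ cannot be derived from the structure of $\Hol(\Q_{4n})$ and $\Aut(\Ga)$ alone. Your Step 2 is valid only when $p_i\nmid p_j-1$ for all $j<i$, for instance when $i=1$.

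The paper supplies the missing idea by induction on $i$, from the largest prime down:
\begin{itemize}
\item It writes a $p_i$-element of $H$ as $R(z)\g_1\cdots\g_i$ with $\g_j\in B_j$, and gets $z\in C_{2n}$ from the block system $\{C_{2n},C_{2n}b\}$.
\item It uses that $H_{p_j}=\la R(a_j)\s_{a_j}^{r_j}\ra$ for $j<i$ commutes with $H_{p_i}$, since both lie in the cyclic subgroup $\la x\ra$ of $H\cong\Q_{4n}$.
\item All factors other than $\g_j$ centralize $R(a_j)\s_{a_j}^{r_j}$, so $\g_j$ does too. Hence $a_j^{\g_j}=a_j$, so $\g_j\in\la\s_{a_j}\ra$, and $\g_j=1$ because it is a $p_i$-element.
\end{itemize}

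\textbf{Step 3 is left open, and the tactics you propose cannot close it.}

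Semiregularity is no obstruction. For $k_i\ge 2$ one checks that $(R(a_i)\a_i)^{p_i}=R(a_i^{p_i})$ and that no nontrivial power of $R(a_i)\a_i$ fixes a vertex. So $\la R(a_i)\a_i\ra$ is semiregular and cyclic of order $p_i^{k_i}$.

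Nor can you always combine elements to land in $\la\b_i\ra\a_i$. Modulo the central subgroup $\la\b_i\ra$ of $\la\s_{a_i},\a_i\ra$, the elements of $\la\s_{a_i}\a_i\ra$ are $\s_{a_i}^j\a_i^j$. So this group misses $\la\b_i\ra\a_i$ entirely, and Lemma~\ref{lm1}(1) says nothing about it.

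The missing tool is Lemma~\ref{lm1}(2):
\begin{itemize}
\item By your own computation, $\g^{p_i}=\s_{a_i}^{vp_i}$, which is nontrivial when $p_i^{k_i-1}\nmid v$.
\item A power of it then generates $\la\b_i\ra$, so $\b_i=\s_d\in\Aut(\Q_{4n},S)$ with $d=a_i^{p_i^{k_i-1}}$ of order $p_i$.
\item $\g$ acts on $C_{2n}$ as $\a_i^j\neq 1$, which fixes every coset of $\la d\ra$.
\item The argument of Lemma~\ref{lm1}(2) then shows $\Ga$ is non-normal. That argument only uses the action on $C_{2n}$, and the paper applies it the same way.
\end{itemize}

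Together with your case $p_i^{k_i-1}\mid v$, this proves $\la\s_{a_i}\ra P_i\cap\Aut(\Q_{4n},S)\le\la\s_{a_i}\ra$. This route is slightly more direct than the paper's Claim. The paper instead uses the regular $p$-group power formula (Proposition~\ref{pgroup}) and compares $o(x)$ with $o(y)$. It applies Lemma~\ref{lm1}(1) when $o(x)\le o(y)$ and Lemma~\ref{lm1}(2) when $o(x)>o(y)$ and $y\neq 1$.
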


\proof Since $1\leq i\leq s$, $p_i$ is an odd prime and $n
\geq 3$. Set $$B_i=\la \s_{a_i}\ra \rtimes \Aut(C_{p_i^{k_i}})\mbox{ and } S_i=\la \s_{a_i}\ra \rtimes \Aut(C_{p_i^{k_i}})_{p_i}.$$
Then $S_i$ is the unique Sylow $p_i$-subgroup of $B_i$. By Eq~(\ref{holQ_4n2}),
$B_i\unlhd \Aut(\Q_{4n})$ and hence $S_i\unlhd \Aut(\Q_{4n})$.
Since $\la \s_{a_i}\ra\unlhd S_i$ and $\Aut(C_{p_i^{k_i}})$ is abelian, Proposition~\ref{Derived} implies that $S_i'\leq \la \s_{a_i}\ra$. By Proposition~\ref{pgroup}, $S_i$ is a regular $p_i$-group. Let $A=\Aut(\Ga)$. Then $\Aut(\Q_{4n},S)\leq A$. First we have the following claim:

\medskip
\noindent {\bf Claim:} $S_i\cap \Aut(\Q_{4n},S)\leq \la \s_{a_i}\ra$.

\medskip

	Let $1\not=z\in S_i\cap \Aut(\Q_{4n},S)$. Then $z=xy$ with $x\in \la \s_{a_i}\ra$ and $y\in \Aut(C_{p_i^{k_i}})_{p_i}$. To prove the claim, it suffices to show that $y=1$. We may assume that $x=\s_{a_i}^r$ and $a_i^y=a_i^s$ for some integers $r$ and $s$, where $(s,p_i)=1$. Let $o(y)=p_i^t$ for some non-negative integer $t$. Recall that $o(a_i)=p_i^{k_i}$ with $k_i\geq 1$. Then $a_i=a_i^{y^{p_i^t}}=a_i^{s^{p_i^t}}$, that is, $s^{p_i^t}=1\mod(p_i^{k_i})$. This implies that $s^{p_i^t}=1\mod (p_i)$, and by the well-known Fermat's Little Theorem, $s=1\mod (p_i)$. Since  $x^y=(\s_{a_i}^r)^y=\s_{a_i^y}^r=\s_{a_i^s}^r=(\s_{a_i}^r)^s=x^s$, we have $[x,y]\in \la x^{p_i}\ra$ and hence $\la x,y\ra'\leq \la x^{p_i}\ra$. Since $S_i$ is a regular $p_i$-group, $(xy)^{p_i}=x^{p_i}y^{p_i}z$ with $z\in \la x^{p_i^2}\ra$.
Let $o(x)=p_i^\ell$ for some non-negative integer $\ell$.

Suppose $o(x)=p_i^\ell\leq o(y)=p_i^t$, that is, $\ell \leq t$.  Since $(xy)^{p_i}=x^{p_i}y^{p_i}z$ with $z\in \la x^{p_i^2}\ra$, we have  $(xy)^{p_i^{\ell-1}}=x^{p_i^{\ell-1}}y^{p_i^{\ell-1}}$ and $(xy)^{p_i^{\ell}}=y^{p_i^{\ell}}$. Since $z=xy\in \Aut(\Q_{4n},S)$, we have $x^{p_i^{\ell-1}}y^{p_i^{\ell-1}}\in \Aut(\Q_{4n},S)$ and $y^{p_i^{\ell}}\in \Aut(\Q_{4n},S)$. If $\ell<t$ then $y^{p_i^{\ell}}\in \Aut(\Q_{4n},S)$ implies $\a_i\in  \Aut(\Q_{4n},S)$ as $y\in \Aut(C_{p_i^{k_i}})_{p_i}$. By Lemma~\ref{lm1}~(1), $\Ga$ is non-normal, contradicting the hypothesis. Thus we may let $\ell=t$. Then $\la x^{p_i^{\ell-1}}\ra=\la \b_i\ra$ and $\la y^{p_i^{\ell-1}}\ra=\la \a_i\ra$. From $x^{p_i^{\ell-1}}y^{p_i^{\ell-1}}\in \Aut(\Q_{4n},S)$ and $\la \b_i\ra\unlhd \Aut(\Q_{4n})$, we obtain $\Aut(\Q_{4n},S)\cap \la \b_i\ra\a_i\not=1$. Again by Lemma~\ref{lm1}~(1), $\Ga$ is non-normal, a contradiction.

Thus, $o(x)=p_i^\ell> o(y)=p_i^t$. Then $\la x^{p_i^{\ell-1}}\ra=\la \b_i\ra$ and $\la y^{p_i^{t-1}}\ra=\la \a_i\ra$. Write $d=a_i^{p_i^{k_i-1}}$. 

Suppose $t\geq 1$. Since $(xy)^{p_i}=x^{p_i}y^{p_i}z$ with $z\in \la x^{p_i^2}\ra$, we have  $(xy)^{p_i^{\ell-1}}=x^{p_i^{\ell-1}}\in \Aut(\Q_{4n},S)$ and $(xy)^{p_i^{t-1}}=x^{p_i^{t-1}}y^{p_i^{t-1}}z'\in \Aut(\Q_{4n},S)$ with $z'\in \la x^{p_i}\ra$. Thus,  $\b_i\in \Aut(\Q_{4n},S)$ and $\la \s_{a_i}\ra \a_i\cap \Aut(\Q_{4n},S)\not=\emptyset$. Note that $d$ has order $p_i$, $\b_i=\s_d$ and $\la \s_{a_i}\ra \a_i$ fixes every coset of $\la d\ra$ in $C_{2n}$.  By Lemma~\ref{lm1}~(2), $\Ga$ is non-normal, a contradiction. 

It follows that $t=0$ and $y=1$. Thus, $z=x\in \la \s_{a_i}\ra$, as claimed. \qed

Recall that $1\leq i\leq s$.
Since $H \cong R(\Q_{4n})\unlhd A$, Proposition~\ref{normal} and Eq~(\ref{holQ_4n1}) imply that
$H\le R(\Q_{4n})\rtimes\Aut(\Q_{4n},S)\le \Hol(\Q_{4n})=R(\Q_{4n})\rtimes \Aut(\Q_{4n})$. Since $p_i$ is an odd prime, $\Aut(C_{p_i^{k_i}})\cong C_{p_i^{k_i-1}(p_i-1)}$, and since $p_{s+1}=2$, $\Aut(C_{p_{s+1}^{k_{s+1}}})$ is a $2$-group. Recall that $p_1>p_2>\cdots>p_s>p_{s+1}$. Then $p_i\nmid  |\la \s_{a_j}\ra \rtimes \Aut(C_{p_j^{k_j}})|$ for every $i<j\leq s+1$. By Eq~(\ref{holQ_4n2}),
$\Aut(\Q_{4n})_{p_i}\leq (\la \s_{a_1}\ra \rtimes \Aut(C_{p_1^{k_1}}))\times \cdots \times (\la \s_{a_i}\ra \rtimes \Aut(C_{p_i^{k_i}}))$ for any Sylow $p_i$-subgroup $\Aut(\Q_{4n})_{p_i}$ of $\Aut(\Q_{4n})$.
Since $H_{p_i}R(\Q_{4n})/R(\Q_{4n})\leq \Hol(\Q_{4n})/R(\Q_{4n})\cong \Aut(\Q_{4n})$ and $H_{p_i}R(\Q_{4n})/R(\Q_{4n})$ is a $p_i$-subgroup, we have
\begin{equation*}\label{Hpi}
H_{p_i}\leq R(\Q_{4n})\Aut(\Q_{4n})_{p_i}\leq R(\Q_{4n})((\la \s_{a_1}\ra \rtimes \Aut(C_{p_1^{k_1}}))\times \cdots \times (\la \s_{a_i}\ra \rtimes \Aut(C_{p_i^{k_i}}))).
\end{equation*}

Take $h_i\in H_{p_i}$. Then $h_i$ is a $p_i$-element and we may write
\begin{equation}\label{h_i}
    h_i=R(z)\g_1\cdots\g_i, \mbox{\ where } z\in \Q_{4n} \mbox{\ and\ } \g_j\in B_j=\la \s_{a_j}\ra \rtimes \Aut(C_{p_j^{k_j}}) \mbox{ for } 1\leq j\leq i.
\end{equation}

Let $\g=\g_1\cdots\g_i$. Since $h_i\in A$ and $R(z)\in A$, we have $\g\in A$, and since $\g\in \Aut(\Q_{4n})$, we have $\g\in A_1=\Aut(\Q_{4n},S)$. Since $R(\Q_{4n})\unlhd \Hol(\Q_{4n})$, we have $1=h_i^{o(h_i)}\in R(\Q_{4n})\g^{o(h_i)}$, forcing $\g^{o(h_i)}=1$.
By Eq~(\ref{holQ_4n2}), $\g$ and $\g_j$ are $p_i$-elements for every $1\leq j\leq i$.

Since $R(\Q_{4n})\unlhd A$ and $n\geq 3$, $R(C_{2n})$ is normal in $A$ and hence the orbits $\la a\ra$ and $\la a\ra b$ of $R(C_{2n})$ consist of a complete imprimitive block system of $A$ on $\Q_{4n}$. Since $p_i$ is odd, $H_{p_i}$ fixes both $\la a\ra$ and $\la a\ra b$ setwise. Since $\g\in A_1$, $\g$ fixes
both $\la a\ra$ and $\la a\ra b$ setwise, and hence $R(z)$ fixes both $\la a\ra$ and $\la a\ra b$ setwise, forcing that $z\in C_{2n}$.

\medskip
Now we finish the proof by induction on $i$.

Let $i=1$. By Eq~(\ref{h_i}), $\g=\g_1\in B_1=\la \s_{a_1}\ra\rtimes\Aut(C_{p_1^{k_1}})$, and since $\g$ is a $p_1$-element, we have $\g\in S_1=\la \s_{a_1}\ra\rtimes\Aut(C_{p_1^{k_1}})_{p_1}$, because $\la \s_{a_1}\ra\unlhd B_1$. Since $\g\in \Aut(\Q_{4n},S)$, Claim implies that $\g\in \langle \s_{a_1}\ra$ and hence $h_1\in R(C_{2n})\times \langle \s_{a_1}\ra$, that is, $H_{p_1}\leq R(C_{2n})\times \langle \s_{a_1}\ra$. It follows that $H_{p_1}\leq \la R(a_1)\ra\times \langle \s_{a_1}\ra\cong \mz_{p_1^{k_1}}\times\mz_{p_1^{k_1}}$, because $\la R(a_1)\ra$ is the unique Sylow $p_1$-subgroup of $R(C_{2n})$. Since $H\cong R(\Q_{4n})$,  $H_{p_1}$ is a cyclic group of order $p_1^{k_1}$, and we may let $H_{p_1}=\la u\ra$ and $u=vw$ with $v\in \la R(a_1)\ra$ and $w\in \la \s_{a_1}\ra$, where $o(u)=p_1^{k_1}$. If $o(v)<p_1^{k_1}$ then $1\not=u^{o(v)}=(vw)^{o(v)}=w^{o(v)}\in \la \s_{a_1}\ra$, contradicting the semiregularity of $H_{p_1}$. Thus, $o(v)=p_1^{k_1}$, and hence $\la v\ra=\la R(a_1)\ra$. Then we may let $H_{p_1}=\la u\ra=\la R(a_1)\s_{a_1}^{r_1}\ra$ for some integer $r_1$. The lemma is true for $i=1$.

Let $i\geq 2$ and $1\leq j<i\leq s$. By inductive hypothesis, we assume that $H_{p_j}=\la R(a_j)\s_{a_j}^{r_j}\ra$ for some integer $r_j$ for each $1\leq j<i$. To finish the proof, we only need to show that $H_{p_i}=\la R(a_i)\s_{a_i}^{r_i}\ra$ for some integer $r_i$.

For any $h_i\in H_{p_i}$, by Eq~(\ref{h_i}) we have $h_i=R(z)\g_1\cdots\g_i$, where $z\in C_{2n}$ and $\g_j\in B_j=\la \s_{a_j}\ra \rtimes \Aut(C_{p_j^{k_j}})$. Recall $H_{p_j}=\la R(a_j)\s_{a_j}^{r_j}\ra$. Since $H\cong \Q_{4n}$, we have $[H_{p_j},H_{p_i}]=1$, and hence $[R(a_j)\s_{a_j}^{r_j},h_i]=1$. By Eq~(\ref{s_ai}), $[R(a_j)\s_{a_j}^{r_j},R(z)]=1$ as $z\in C_{2n}$. Furthermore, for any $1\leq \ell\leq i$ with $\ell\not=j$, Eq~(\ref{holQ_4n2}) implies that $[R(a_j)\s_{a_j}^{r_j},\g_\ell]=1$, where $\g_\ell\in B_\ell=\la \s_{a_\ell}\ra \rtimes \Aut(C_{p_\ell^{k_\ell}})$. It follows from Eq~(\ref{h_i}) that $[R(a_j)\s_{a_j}^{r_j},\g_j]=1$, that is, $(R(a_j)\s_{a_j}^{r_j})^{\g_j}=R(a_j)\s_{a_j}^{r_j}$. Let $\g_j=\d_1\d_2$ with $\d_1\in \la \s_{a_j}\ra$ and $\d_2\in \Aut(C_{p_j^{k_j}})$. Then $R(a_j)\s_{a_j}^{r_j}=(R(a_j)\s_{a_j}^{r_j})^{\d_1\d_2}=(R(a_j)\s_{a_j}^{r_j})^{\d_2}=
R(a_j^{\d_2})(\s_{a_j}^{r_j})^{\d_2}$. It follows that $R(a_j)=R(a_j^{\d_2})$ and so $a_j=a_j^{\d_2}$. Since $\d_2\in \Aut(C_{p_j^{k_j}})$, we have $\d_2=1$ and $\g_j=\d_1\in \la \s_{a_j}\ra$, and since $\g_j$ is an $p_i$-element and $j\not=i$, we have $1=\d_1=\g_j$.
Thus, $h_i=R(z)\g_i$, where $z\in C_{2n}$ and $\g_i\in B_i=\la \s_{a_i}\ra \rtimes \Aut(C_{p_i^{k_i}})$. Since $\g_i$ is a $p_i$-element, we have $\g_i\in S_i$ and $\g_i\in A_1=\Aut(\Q_{4n},S)$. By
Claim, $\g_i\in \la \s_{a_i}\ra$, and then the argument in the paragraph for $i=1$, by replacing $1$ with $i$, implies that $H_{p_i}=\la R(a_i)\s_{a_i}^{r_i}\ra$ for some integer $r_i$, as required. This completes the proof.
\qed

\medskip

\noindent{\bf Proof of Theorem~\ref{mainth1}:} Let $\Ga=\Cay(\Q_{4n},S)$ be a normal Cayley graph of the generalized quaternion group $\Q_{4n}$ ($n\geq 2$) and let $A=\Aut(\Ga)$. Then $S=S^{-1}$ and $R(\Q_{4n})\unlhd A=R(\Q_{4n})\rtimes \Aut(\Q_{4n},S)\leq \Hol(\Q_{4n})$. Let $n=2$ or $n$ is odd. By Proposition~\ref{ndci}, $\Q_{4n}$ is an NDCI-group, and hence a NCI-group. Thus, we may assume that $n\geq 4$ is even. Thus, $R(C_{2n})\unlhd \Hol(\Q_{4n})$, and by Eq~(\ref{factorizationof2n}), $k_{s+1}\geq 2$. 

Let $H\le A$ be a regular subgroup isomorphic to $\Q_{4n}$. Write
\begin{equation*}
    H=\la x,y\mid x^{2n}=1, y^2=x^n, x^y=x^{-1}\ra.
\end{equation*}
By Babai~\cite{Babai}, $\Ga$ is CI if and only
if all regular subgroups of $\Aut(\Ga)$ isomorphic to $\Q_{4n}$ are conjugate in $A$, and since $R(\Q_{4n})\unlhd A$,  $\Ga$ is CI if and only if $R(\Q_{4n})$ is the unique regular subgroup of $A$ isomorphic to $\Q_{4n}$. To finish the proof, it suffices to show that $H=R(\Q_{4n})$. 

By Lemma~\ref{Hp_i1-s}, if $1\leq j\leq s$ then $H_{p_j}=\la R(a_j)\s_{a_j}^{r_j}\ra$ for some integer $r_j$, and since $p_j$ is odd, we have $H_{p_j}\leq \langle x\rangle$. Set
$$B_i=\la \s_{a_i}\ra \rtimes \Aut(C_{p_i^{k_i}}), \mbox{ for each } 1\leq i\leq s+1.$$

Take a $2$-element $h$ in $H$. By Eqs~(\ref{holQ_4n1}) and (\ref{holQ_4n2}), we may write
\begin{equation}\label{h}
    h=R(z)\g=R(z)\g_1\cdots\g_{s+1}, \mbox{\ where } z\in \Q_{4n} \mbox{\ and\ } \g_i\in B_i \mbox{ for } 1\leq i\leq s+1.
\end{equation}
Then $\g=\g_1\cdots\g_{s+1}$, $\g\in \Aut(\Q_{4n},S)$ and $h^2=R(zz^{\g^{-1}})\g^2$. Since $R(\Q_{4n})\unlhd \Hol(\Q_{4n})$, we have $1=h^{o(h)}\in R(\Q_{4n})\g^{o(h)}=R(\Q_{4n})\g_1^{o(h)}\cdots\g_{s+1}^{o(h)}$, implying $1=\g^{o(h)}=\g_1^{o(h)}=\cdots=\g_{s+1}^{o(h)}$. Since $h$ is a $2$-element, $\g$ and $\g_i$ are $2$-elements. Recall that $C_{2n}=\la a\ra$ and $\Q_{4n}=C_{2n}\cup C_{2n}b$. Since $R(C_{2n})\unlhd \Hol(\Q_{4n})$, every subgroup of $\la R(a)\ra$ is characteristic in $R(\Q_{4n})$, and hence normal in $\Hol(\Q_{4n})$, which implies that the set of orbits of any subgroup of $R(C_{2n})$ on $\Q_{4n}$ forms a complete imprimitive block system of $\Aut(\Ga)$ and $\Hol(\Q_{4n})$. In particular, $\{C_{2n}, C_{2n}b\}$ is a complete imprimitive block system. From Eq~(\ref{h}) we know that if $h$ fixes $C_{2n}$ and $C_{2n}b$ setwise then $z\in C_{2n}$, and if $h$ interchanges $C_{2n}$ and $C_{2n}b$ then $z\in C_{2n}b$.

\medskip
Assume that $h$ fixes $C_{2n}$ and $C_{2n}b$ setwise. By Eq~(\ref{h}), we have
\begin{equation}\label{hfixesboth}
  h=R(z)\g=R(z)\g_1\cdots\g_{s+1}, \mbox{\ where } z\in C_{2n} \mbox{\ and\ } \g_i\in B_i \mbox{ for } 1\leq i\leq s+1.
\end{equation}

\medskip
\noindent{\bf Claim 1:} Assume $h$ fixes $C_{2n}$ and $C_{2n}b$ setwise. If $h\in \la x\ra$ then $z\in \la a_{s+1}\ra$ and $\g_j=1$ for all $1\leq j\leq s$. Furthermore, either $\g_{s+1}\in \la\s_{a_{s+1}}\ra$, or $k_{s+1}\geq 3$ and $o(h)=2$ or $4$. If $h\in \la x\ra y$, then $\g_j\in \la \s_{a_j}\ra\e_j$ for $1\leq j\leq s$ and one of the following occurs: (i) $\g_{s+1}\in \la \s_{a^n}\ra$ and $o(z_{s+1})=4$, and (ii) $k_{s+1}\geq 3$, $\g_{s+1}\in \la \s_{a^n}\ra \a_{s+1}$ and $o(z_{s+1})=4$.

\medskip
Let $h\in \la x\ra$ and $1\leq j\leq s$. Since $H_{p_j}=\la R(a_j)\s_{a_j}^{r_j}\ra\leq \la x\ra$, we have $[R(a_j)\s_{a_j}^{r_j},h]=1$, and since $z\in C_{2n}$, we have  $[R(a_j)\s_{a_j}^{r_j}, R(z)]=1$. For any $1\leq \ell\leq s+1$ with $\ell\not=j$, we have $[\g_\ell,R(a_j)\s_{a_j}^{r_j}]=1$ as $\g_\ell\in B_\ell=\la \s_{a_\ell}\ra \rtimes \Aut(C_{p_\ell^{k_\ell}})$. By Eq~(\ref{hfixesboth}), $[\g_j,R(a_j)\s_{a_j}^{r_j}]=1$. It follows  $R(a_j)\s_{a_j}^{r_j}=(R(a_j)\s_{a_j}^{r_j})^{\g_j}=R(a_j^{\g_j})\s_{a_j^{\g_j}}^{r_j}$. Thus, $R(a_j^{\g_j})=R(a_j)$, that is, $a_j^{\g_j}=a_j$. Since $B_j=\la \s_{a_j}\ra \rtimes \Aut(C_{p_j^{k_j}})$, we have $\g_j\in \la \s_{a_j}\ra$, and hence $\g_j=1$ as $\g_j$ is a $2$-element.

Thus, $h=R(z)\g_{s+1}$, and $\g_{s+1}\in \Aut(\Q_{4n},S)$. Since $z\in C_{2n}$, we may write $z=z_1z_2$ with $z_1\in \la a_1a_2\cdots a_s\ra$ and $z_2\in \la a_{s+1}\ra$. Then  $h=R(z_1)R(z_2)\g_{s+1}$ and  $[R(z_1),R(z_2)\g_{s+1}]=1$. Since $o(R(z_1))$ is odd and $o(R(z_2)\g_{s+1})$ is a power of $2$, we have $R(z_1)=1$ as $h$ is a $2$-element. It follows $z\in \la a_{s+1}\ra$. Let $\g_{s+1}=\d_1\d_2\in \la \s_{a_{s+1}}\ra\rtimes \Aut(C_{2^{k_{s+1}}})$ for some $\d_1\in \la \s_{a_{s+1}}\ra$ and $\d_2\in \Aut(C_{2^{k_{s+1}}})$.

Suppose $o(\d_2)\geq 4$. Then $k_{s+1}\geq 4$ and $\Aut(C_{2^{k_{s+1}}})\cong \mz_2\times\mz_{2^{k_{s+1}-2}}$. Clearly, $\Aut(C_{2^{k_{s+1}}})$ has three involutions, that is, $\e_{s+1}$, $\a_{s+1}$ and $\e_{s+1}\a_{s+1}$, and $\d_2^{o(\d_2)/2}=\a_{s+1}$. Note that $h^{o(\d_2)/2}\in \la R(a_{s+1})\ra\g_{s+1}^{o(\d_2)/2}\subseteq \la R(a_{s+1})\ra\la \s_{a_{s+1}}\ra\d_2^{o(\d_2)/2}=\la R(a_{s+1})\ra\la \s_{a_{s+1}}\ra\a_{s+1}$. It follows that  $\la\s_{a_{s+1}}\ra\a_{s+1}\cap \Aut(\Q_{4n},S)\not=1$. By Lemma~\ref{lm1}~(2), $\Ga$ is non-normal, a contradiction.

It follows that $o(\d_2)\leq 2$ and $\d_2=1,\a_{s+1},\e_{s+1}$ or $\a_{s+1}\e_{s+1}$. Since $\g_{s+1}\in \Aut(\Q_{4n},S)$, Lemma~\ref{lm1}~(2) implies $\d_2\not=\a_{s+1}$. If $\d_2=1$ then $\g_{s+1}\in \la \s_{a_{s+1}}\ra$, as required. We may assume that $k_{s+1}\geq 2$ and $\d_2=\e_{s+1}$ or $\a_{s+1}\e_{s+1}$. If $k_{s+1}=2$ then $\d_2=\e_{s+1}=\a_{s+1}$ or $\d_2=\a_{s+1}\e_{s+1}=1$, which have been considered above. Thus, $k_{s+1}\geq 3$, and $h\in \la R(a_{s+1})\ra \la \s_{a_{s+1}}\ra\e_{s+1}$ or $\la R(a_{s+1})\ra \la \s_{a_{s+1}}\ra\e_{s+1}\a_{s+1}$. By Eqs~(\ref{auorderp_i}) and (\ref{involutions}), every element of  $\la R(a_{s+1})\ra \la \s_{a_{s+1}}\ra\e_{s+1}$ has order $2$ and every element of $\la R(a_{s+1})\ra \la \s_{a_{s+1}}\ra\e_{s+1}\a_{s+1}$ has order $2$ or $4$. Thus, $o(h)=2$ or $4$.

Now let $h\in \la x\ra y$. Then $o(h)=4$ and for each $1\leq j\leq s$, we have $(R(a_j)\s_{a_j^{r_j}})^h=R(a_j^{-1})\s_{a_j^{-r_j}}$ as $H_{p_j}=\la R(a_j)\s_{a_j}^{r_j}\ra\leq \la x\ra$.  It follows that $R(a_j^{-1})\s_{a_j^{-r_j}}=(R(a_j)\s_{a_j^{r_j}})^h=(R(a_j)\s_{a_j^{r_j}})^{\g_j}=
R(a_j^{\g_j})\s_{(a_j^{r_j})^{\g_j}}$. This implies that $a_j^{\g_j}=a_j^{-1}$, and since $\g_j\in B_j=\la \s_{a_j}\ra \rtimes \Aut(C_{p_j^{k_j}})$, we have $\g_j\in \la \s_{a_j}\ra\e_j$. In particular,  $o(\g_j)=2$. 

Similarly let $\g_{s+1}=\d_1\d_2$ with $\d_1\in\la \s_{a_{s+1}}\ra$ and $\d_2\in \Aut(C_{2^{k_{s+1}}})$. 
By the preceding paragraph beginning with ``Suppose $o(\d_2)\geq 4$.'', we have $\d_2^2=1$ and $\d_2=1,\a_{s+1},\e_{s+1}$ or $\a_{s+1}\e_{s+1}$. Since $o(\g_j)=2$ for all $1\leq j\leq s$, we have $h^2=R(z)\g R(z)\g=R(zz^{\g^{-1}})\g_{s+1}^2$. It follows that $zz^{\g^{-1}}\in C_{2n}$ and  $zz^{\g^{-1}}\not=1$ because $\la h^2\ra $ is semiregular. Note that $\g_{s+1}^2\in \la \s_{a_{s+1}}\ra \d_2^2= \la \s_{a_{s+1}}\ra$. Since $[\la \s_{a_{s+1}}\ra, R(C_{2n})]=1$ and $o(h^2)=2$, we have $h^2=R(a^n)$ or $R(a^n)\s_{a^n}$, and since $R(a^n)\s_{a^n}$ fixes $b$, we have $h^2=R(a^n)$ and $\g_{s+1}^2=1$. In particular, $zz^{\g^{-1}}=a^n$. Recall that $z\in C_{2n}$. If $\d_2=1$, then $\g_{s+1}^2=1$ implies $\g_{s+1}\in \la \s_{a^n}\ra$, and $zz^{\g^{-1}}=a^n$ implies $o(z_{s+1})=4$, which is the (i) in the claim. Thus, we may let $\d_2\not=1$, implying $k_{s+1}\geq 2$. If $\d_2=\e_{s+1}$, then $zz^{\g^{-1}}=zz^\e=1$, a contradiction. Now we may let $\d_2=\a_{s+1}$ or $\e_{s+1}\a_{s+1}$. For $k_{s+1}=2$, we have $\d_2=\a_{s+1}=\e_{s+1}$ or $\d_2=\e_{s+1}\a_{s+1}=1$, which has been considered above. Thus, we may assume that $k_{s+1}\geq 3$. 
If $\d_2=\e_{s+1}\a_{s+1}$, then $\g=\g_1\g_2\cdots \g_{s+1}\in \la \s_a\ra\e\a_{s+1}$. By Lemma~\ref{lm1}~(3), $\Ga$ is non-normal as $\g\in \Aut(\Q_{4n},S)$, a contradiction. Thus,  $\d_2=\a_{s+1}$. From $\g_{s+1}^2=1$ and $zz^{\g^{-1}}=a^n$, we have $\g_{s+1}\in \la \s_{a^n}\ra \a_{s+1}$ and $o(z_{s+1})=4$. This completes the proof of Claim~1.
\qed


\medskip
Assume that $h$ interchanges $C_{2n}$ and $C_{2n}b$. By Eq~(\ref{h}) we have
\begin{equation}\label{hinterchangesboth}
h=R(zb)\g=R(zb)\g_1\cdots\g_{s+1}, \mbox{ where } z\in C_{2n} \mbox{ and } \g_i\in B_i \mbox{ for } 1\leq i\leq s+1.
\end{equation}

\medskip
\noindent{\bf Claim 2:} Assume $h$ interchanges $C_{2n}$ and $C_{2n}b$. Then $h^2\in \la R(a_{s+1})\ra\times \la \s_{a_{s+1}}\ra$. Let $1\leq j\leq s$. If $h\in \la x\ra$ then either $\g_j=1$ with $H_{p_j}=\la R(a_j) \s_{a_j}^2\ra$, or $\g_j=\s_{z_j^{-2}}\e_j\in \la \s_{a_j}\ra \e_j$ with $H_{p_j}=\la R(a_j)\ra$, and $\g_{s+1}\in \la \s_{a_{s+1}}\ra\d$ with $\d\in \Aut(C_{2^{k_{s+1}}})$ and  $\d^2=1$. If $h\in \la x\ra y$ then either $\g_j=1$ with $H_{p_j}=\la R(a_j)\ra$, or $\g_j=\s_{z_j^{-2}}\e_j\in \la \s_{a_j}\ra \e_j$ with $H_{p_j}=\la R(a_j) \s_{a_j}^2\ra$, and one of the following occurs: (i) $\g_{s+1}=1$, (ii) $k_{s+1}\geq 3$, $\g_{s+1}=\s_{z_{s+1}^{-2}}\e_{s+1}$, and (iii) $k_{s+1}\geq 3$, $\g_{s+1}=\s_{z_{s+1}^{-2-2^{k_{s+1}-1}}}\e_{s+1}\a_{s+1}$, in which (i) cannot occur if $H_{p_j}=\la R(a_j) \s_{a_j}^2\ra$ for some $1\leq j\leq s$.

\medskip

Let $1\leq j\leq s$. Recall that $H_{p_j}=\la R(a_j)\s_{a_j}^{r_j}\ra\leq \la x\ra$ for some integer $r_j$. By Eq~(\ref{s^R}), $$(\s_{a_j}^{r_j})^{R(zb)}=R(a_j^{r_j})\s_{a_j}^{r_j} \mbox{ and } (R(a_j)\s_{a_j}^{r_j})^{R(zb)}=R(a_j^{r_j-1})\s_{a_j}^{r_j}.$$

Since $\g_i\in B_i=\la \s_{a_i}\ra \rtimes \Aut(C_{p_i^{k_i}})$, we may write  $\g_i=\d_{1_i}\d_{2_i}$ with $\d_{1_i}\in \la \s_{a_i}\ra$ and $\d_{2_i}\in \Aut(C_{p_i^{k_i}})$, for all $1\leq i\leq s+1$. Note that $[\g_i,R(a_j^{r_j-1})\s_{a_j}^{r_j}]=1$ for $i\not=j$.

Assume $h\in \la x\ra$. Then $[R(a_j)\s_{a_j}^{r_j},h]=1$. Thus,  $R(a_j)\s_{a_j}^{r_j}=(R(a_j)\s_{a_j}^{r_j})^h=(R(a_j)\s_{a_j}^{r_j})^{R(zb)\g}=(R(a_j^{r_j-1})\s_{a_j}^{r_j})^{\g_j}$.
It follows that $(a_j^{r_j})^{\g_j}=a_j^{r_j}$ and $(a_j^{r_j-1})^{\g_j}=a_j$. Since $\d_{1_j}\in \la \s_{a_j}\ra$, we have $(a_j^{r_j})^{\d_{2_j}}=a_j^{r_j}$ and $(a_j^{r_j-1})^{\d_{2_j}}=a_j$. It follows that $a_j^{\d_{2_j}}=a_j^{r_j-1}$ and $a_j^{r_j}=(a_j^{r_j})^{\d_{2_j}}=a_j^{r_j(r_j-1)}$. This implies that $r_j(r_j-2)=0\mod (p_j^{k_j})$. Note that $p_j$ is an odd prime. If $(r_j,p_j)=1$ then $r_j=2\mod (p_j^{k_j})$ and $a_j^{\d_{2_j}}=a_j^{r_j-1}=a_j$, that is, $\d_{2_j}=1$ and $\g_j=1$  because $\g_j=\d_{1_j}\in \la \s_{a_j}\ra$ is a $2$-element, implying $H_{p_j}=\la R(a_j)\s_{a_j}^2\ra$. If $(r_j,p_j)\not=1$ then $(r_j-2,p_j)=1$ and $r_j=0\mod (p_j^{k_j})$, implying $\d_{2_j}=\e_j$, $H_{p_j}=\la R(a_j)\ra$, and $\g_j\in \la \s_{a_j}\ra\e_j$. It follows that $\g_j^2=1$ for all $1\leq j\leq s$ and $\g^2=\g_{s+1}^2$. 

Recall that $h^2=R(zb(zb)^{\g^{-1}})\g^2=R(zb(zb)^{\g^{-1}})\g_{s+1}^2$. Since $h^2$ is a $2$-element, we have $zb(zb)^{\g^{-1}} =z(z^{-1})^{\g^{-1}}bb^{\g^{-1}}\in \la a_{s+1}\ra$. Note that  $\g_{s+1}^2\in \la \s_{a_{s+1}}\ra\d_{2_{s+1}}^2$ and $\g_{s+1}^2\in \Aut(\Q_{4n},S)$ as $\g\in \Aut(\Q_{4n},S)$. If $\d_{2_{s+1}}^2\not=1$, then $\la \s_{a_{s+1}}\ra \a_{s+1}\cap \Aut(\Q_{4n},S)\not=\emptyset$, and by Lemma~\ref{lm1}~(2), $\Ga$ is non-normal, a contradiction. Thus, $\d_{2_{s+1}}^2=1$, and  $h^2=R(z(z^{-1})^{\g^{-1}}bb^{\g^{-1}})\g^2\in \la R(a_{s+1})\ra \g_{s+1}^2\subseteq \la R(a_{s+1})\ra\times \la \s_{a_{s+1}}\ra$. For $\g_j\in \la \s_{a_j}\ra\e_j$, write $\g_j=\s_{a_j^{s_j}}\e_j$, and the $j$-part of $z(z^{-1})^{\g^{-1}}bb^{\g^{-1}}$ is $a_j^{s_j}z_j^{2}$ as $z\in C_{2n}$, where $z_j$ is the $j$-part of $z$, and since $z(z^{-1})^{\g^{-1}}bb^{\g^{-1}}\in \la a_{s+1}\ra$, we have $a_j^{s_j}=z_j^{-2}$, that is, $\g_j=\s_{z_j^{-2}}\e_j$. 

Now assume $h\in \la x\ra y$. Then $(R(a_j)\s_{a_j}^{r_j})^h=R(a_j^{-1})\s_{a_j}^{-r_j}$. Thus, $R(a_j^{-1})\s_{a_j}^{-r_j}=(R(a_j)\s_{a_j}^{r_j})^{R(zb)\g}=(R(a_j^{r_j-1})\s_{a_j}^{r_j})^{\g_j}$.
It follows that $(a_j^{r_j})^{\g_j}=a_j^{-r_j}$ and $(a_j^{r_j-1})^{\g_j}=a_j^{-1}$, implying $(a_j^{r_j})^{\d_{2_j}}=a_j^{-r_j}$ and $(a_j^{r_j-1})^{\d_{2_j}}=a_j^{-1}$. Then, $a_j^{\d_{2_j}}=a_j^{1-r_j}$ and $a_j^{-r_j}=(a_j^{r_j})^{\d_{2_j}}=a_j^{r_j(1-r_j)}$. This means that  $r_j(r_j-2)=0\mod (p_j^{k_j})$. If $(r_j,p_j)=1$ then $r_j=2\mod (p_j^{k_j})$ and $a_j^{\d_{2_j}}=a_j^{1-r_j}=a_j^{-1}$, that is, $\d_{2_j}=\e_j$ and $\g_j\in \la \s_{a_j}\ra\e_j$, implying  $H_{p_j}=\la R(a_j)\s_{a_j}^2\ra$ and $\g_j^2=1$. If $(r_j,p_j)\not=1$ then $r_j=0\mod (p_j^{k_j})$ and $a_j^{\d_{2_j}}=a_j$, implying  hence $\g_j=1$ and $H_{p_j}=\la R(a_j)\ra$. Thus, $\g_j^2=1$ for all $1\leq j\leq s$. By the argument in the above paragraph, $\d_{2_{s+1}}^2=1$, $h^2=R(z(z^{-1})^{\g^{-1}}bb^{\g^{-1}})\g_{s+1}^2\in \la R(a_{s+1})\ra\times\la \s_{s+1}\ra$, and if $\g_j\in \la \s_{a_j}\ra\e_j$ then $\g_j=\s_{z_j^{-2}}\e_j$. 

Note that $o(h)=4$. Since $\la h^2\ra $ is semiregular,$h^2=R(a^n)$, implying that $z(z^{-1})^{\g^{-1}}bb^{\g^{-1}}=a^n$ and $\g_{s+1}^2=1$. Recall that $\g_{s+1}=\d_{1_{s+1}}\d_{2_{s+1}}\in B_{s+1}$ for $\d_{1_{s+1}}\in \la \s_{a_{s+1}}\ra$ and $\d_{2_{s+1}}\in \Aut(C_{2^{k_{s+1}})})$. Then $\d_{2_{s+1}}^2=1$ implies $\d_{2_{s+1}}=1,\e_{s+1},\a_{s+1}$ or $\e_{s+1}\a_{s+1}$.  

Recall that $\g_j=1$ or $\s_{z_j^{-2}}\e_j$, and $\g_j\not=1$ if and only if $H_{p_j}=\la R(a_j)\s_{a_j}^2\ra$, in which $\s_{a_j}\in \Aut(\Q_{4n},S)$  as $p_j$ is odd. Set $\pi=\{p_k\ |\ \g_k\not=1, 1\leq k\leq s\}$ and $a_{\pi}=\prod_{p_k\in \pi}a_k$. Then $\s_{a_\pi}\in \Aut(\Q_{4n},S)$, and since $\s_{a^n}\in \Aut(\Q_{4n},S)$, we have $\s_{a^na_\pi}\in \Aut(\Q_{4n},S)$. 

Suppose $\d_{2_{s+1}}=\a_{s+1}$. It is easy to see that $\g$ fixes every coset of $\la a^na_\pi\ra$ in $C_{2n}$. By Lemma~\ref{lm1}, $\Ga$ is non-normal, a contradiction. Thus, $\d_{2_{s+1}}\not=\a_{s+1}$. 

Applying $\g_{s+1}^2=1$ and $z(z^{-1})^{\g^{-1}}bb^{\g^{-1}}=a^n$, we have that if $\d_{2_{s+1}}=1$ then $\g_{s+1}=1$, which is (i) of Claim~2. Thus, we assume 
$\d_{2_{s+1}}=\e_{s+1}$ or $\a_{s+1}\e_{s+1}$. Recall that  $k_{s+1}\geq 2$. For $k_{s+1}=2$, we have $\d_{2_{s+1}}=1$ or $\a_{s+1}$, which has been considered above. Thus, we may assume that $k_{s+1}\geq 3$. Again applying $\g_{s+1}^2=1$ and $z(z^{-1})^{\g^{-1}}bb^{\g^{-1}}=a^n$ we have that if $\d_{2_{s+1}}=\e_{s+1}$ then $\g_{s+1}=\s_{z_{s+1}^{-2}}\e_{s+1}$, and if $\d_{2_{s+1}}=\a_{s+1}\e_{s+1}$ then $\g_{s+1}=\s_{z_{s+1}^{-2-2^{k_{s+1}-1}}}\e_{s+1}\a_{s+1}$.

Assume that there is some $1\leq j\leq s$ such that $\g_j\not=1$. Then $1\not=\s_{a_\pi}\in \Aut(\Q_{4n},S)$. Suppose that $\d_{2_{s+1}}=1$. Then $\g$ fixes every coset of $\la a_\pi\ra$ in $C_{2n}$. By Lemma~\ref{lm1}, $\Ga$ is non-normal, a contradiction. Thus, $\d_{2_{s+1}}\not=1$, that is, (i) cannot occur. \qed

Recall that $H$ is regular on $\Q_{4n}$. Note that either $x$ fixes $C_{2n}$ and $C_{2n}b$, or interchanges $C_{2n}$ and $C_{2n}b$. For the former, $y$ interchanges $C_{2n}$ and $C_{2n}b$. For the latter, if $y$ fixes $C_{2n}$ and $C_{2n}b$, then $xy$ interchanges $C_{2n}$ and $C_{2n}b$, and then we replace $y$ by $xy$ because $x$ and $xy$ have the same relations as $x$ and $y$ in $\Q_{4n}$. Thus, we always assume that $y$ interchanges $C_{2n}$ and $C_{2n}b$. By Eq~(\ref{h}), we may write 
\begin{equation}\label{y}
   y=R(db)\lambda=R(db)\lambda_1\cdots\lambda_{s+1}, \mbox{ where } d\in C_{2n} \mbox{ and } \lambda_i\in B_i \mbox{ for each } 1\leq i\leq s+1.
\end{equation}
By Claim~2, we have the following observation.

\medskip
\noindent{\bf Observation 1:} Either $\lambda_j=1$ with $H_{p_j}=\la R(a_j)\ra$, or $\lambda_j=\s_{d_j^{-2}}\e_j\in \la \s_{a_j}\ra \e_j$ with $H_{p_j}=\la R(a_j) \s_{a_j}^2\ra$, and one of the following occurs: (i) $\lambda_{s+1}=1$, (ii) $k_{s+1}\geq 3$, $\lambda_{s+1}=\s_{d_{s+1}^{-2}}\e_{s+1}$, and (iii) $k_{s+1}\geq 3$, $\lambda_{s+1}=\s_{d_{s+1}^{-2-2^{k_{s+1}-1}}}\e_{s+1}\a_{s+1}$. Furthermore, if there is some $1\leq j\leq s$ such that $H_{p_j}=\la R(a_j) \s_{a_j}^2\ra$, then (i) cannot occur, forcing $k_{s+1}\geq 3$. Note that $\lambda^2=1$.

\medskip 
Since $o(x)=o(a)=2n$, we have the following unique decomposition:
$$x=x_1x_2\cdots x_sx_{s+1}, \mbox{ where } o(x_i)=p_i^{k_i} \mbox{ for } 1\leq i\leq s+1.$$
Let $1\leq j\leq s$. Since $H_{p_j}$ is the unique Sylow $p_j$-subgroup of $H$, we have $H_{p_j}=\la x_{j} \ra$, and so $H_{p_j}$ and $x_j$ fix $C_{2n}$ and $C_{2n}b$. Thus, $x$ interchanges $C_{2n}$ and $C_{2n}b$ if and only if $x_{s+1}$ interchanges $C_{2n}$ and $C_{2n}b$. We divide the proof into two cases depending on whether $x$ fixes $C_{2n}$ and $C_{2n}b$ or not.

\medskip
\noindent{ \bf Case I:} $x$ interchanges $C_{2n}$ and $C_{2n}b$.

\medskip
We will derive contradictions, thereby concluding that this case cannot occur.

Since $x_j$ fixes  $C_{2n}$ and $C_{2n}b$ for all $1\leq j\leq s$, $x_{s+1}$ interchanges $C_{2n}$ and $C_{2n}b$. By Eq~(\ref{h}), we may write 
\begin{equation}\label{x-s+1}
 x_{s+1}=R(cb)\mu=R(cb)\mu_1\cdots\mu_{s+1}, \mbox{ where } c\in C_{2n} \mbox{ and } \mu_i\in B_i \mbox{ for } 1\leq i\leq s+1.
\end{equation}
By Claim~2, we have the following observation.

\medskip
\noindent{\bf Observation 2:} $\mu_{s+1}=\s_{a_{s+1}^r}\d\in \la \s_{a_{s+1}}\ra\d$ with $\d^2=1$ and $\d\in \Aut(C_{2^{k_{s+1}}})$, and for $1\leq j\leq s$, either $\mu_j=1$ with $H_{p_j}=\la R(a_j) \s_{a_j}^2\ra$, or $\mu_j=\s_{c_j^{-2}} \e_j\in \la \s_{a_j}\ra \e_j$ with $H_{p_j}=\la R(a_j)\ra$. 

\medskip
Note that $o(x_{s+1})=2^{k_{s+1}}$ and $x_{s+1}^2=R(cb(cb)^{\mu^{-1}})\mu^2=R(cb(cb)^{{\mu_{s+1}^{-1}}})\s_{a_{s+1}^r(a_{s+1}^r)^\d}$. Then 
\begin{equation}\label{s+1squre}
 x_{s+1}^2=R(c_{s+1}c_{s+1}^{-\d}a_{s+1}^{r}a^n)\s_{a_{s+1}^r(a_{s+1}^r)^\d}.
\end{equation}

Since $x^y=x^{-1}$, we have $x_{s+1}^{y}=x_{s+1}^{-1}$, and so $x_{s+1}^{-1}=\mu^{-1}R(cb)^{-1}=R(cb^{-1})^\mu\mu^{-1}$. It follows that $R(cb^{-1})^\mu\mu^{-1}=x_{s+1}^y=R(cb)^{R(db)\lambda}\mu^{R(db)\lambda}$ and hence
$$R(cb^{-1})^\mu\mu^{-1}=R(c^{-1}d^2b)^\lambda R(b^{-1}d^{-1}d^{\mu^{-1}}b^{\mu^{-1}})^\lambda\mu^\lambda=R(c^{-1}dd^{\mu^{-1}}b^{\mu^{-1}})^\lambda\mu^\lambda.$$
Thus, $\mu^\lambda=\mu^{-1}$ and so $\lambda\mu^{-1}=\mu\lambda$. Since $\lambda^2=1$, we have $(\lambda\mu)^2=1$. By Observations 1 and 2, we have $\lambda_j\mu_j\in \la \s_{a_j}\ra\e_j$ for all $1\leq j\leq s$, and hence $(\lambda_{s+1}\mu_{s+1})^2=1$. 
Furthermore, $R(c b^{-1})=R(c^{-1}dd^{\mu^{-1}} b^{\mu^{-1}})^{\mu\lambda}=R((c^{-1}d)^\mu d b)^{\lambda}$, that is, 
$c=(c^{-1}d)^{\mu\lambda} d^{\lambda}b^{\lambda}b$.
Considering the $s+1$-parts of this equation, we have 
\begin{equation}\label{s+1parts}
 c_{s+1}=(c_{s+1}^{-1}d_{s+1})^{\mu_{s+1}\lambda_{s+1}}d_{s+1}^{\lambda_{s+1}}b^{\lambda_{s+1}}b \mbox{ and } (\lambda_{s+1}\mu_{s+1})^2=1.
\end{equation}


\medskip
\noindent {\bf Claim~3:} $\lambda=1$, $\mu_{s+1}^2=1$, and $y=R(db)$. Furthermore, $\mu_j=\s_{c_j^{-2}} \e_j$  and $H_{p_j}=\la R(a_j)\ra$ for all $1\leq j\leq s$.  

Suppose $\lambda_{s+1}\not=1$. Recall that $\mu_{s+1}=\s_{a_{s+1}^r}\d$. 
By Eq~(\ref{y}) and Observation~1, we have $k_{s+1}\geq 3$ and either $\lambda_{s+1}=\s_{d_{s+1}^{-2}}\e_{s+1}$ or  $\lambda_{s+1}=\s_{d_{s+1}^{-2-2^{k_{s+1}-1}}}\e_{s+1}\a_{s+1}$. 

Note that $yx_{s+1}\in H$ fixes $C_{2n}$ and $C_{2n}b$, and $o(yx_{s+1})=4$. Recall that $\lambda^2=1$ and $(\lambda\mu)^2=1$. By Eqs~(\ref{y}) and (\ref{x-s+1}), we have  
\begin{equation}\label{yx-s+1}
yx_{s+1}=R(db(cb)^{\lambda})\lambda\mu, \mbox{ and } (yx_{s+1})^2=R(db(cb)^\lambda ((db)^\lambda cb)^\mu).
\end{equation}   

Let $\lambda_{s+1}=\s_{d_{s+1}^{-2}}\e_{s+1}$.  Since $\lambda_j\mu_j\in \la \s_{a_j}\ra\e_j$ for all $1\leq j\leq s$, we have $\lambda\mu\in \la\s_a\ra \e\delta$, and by Claim~1, $\e_{s+1}\delta=1$ or $\a_{s+1}$, that is, $\delta=\e_{s+1}$ or $\e_{s+1}\a_{s+1}$. Similarly, for $\lambda_{s+1}=\s_{d_{s+1}^{-2-2^{k_{s+1}-1}}}\e_{s+1}\a_{s+1}$ we have $\e_{s+1}\a_{s+1}\delta=1$ or $\a_{s+1}$, and also $\delta=\e_{s+1}$ or $\e_{s+1}\a_{s+1}$. Thus, we always have $\delta=\e_{s+1}$ or $\e_{s+1}\a_{s+1}$, and one of the following four cases occurs: 
\begin{itemize}
    \item[(i)] $(\lambda_{s+1},\mu_{s+1})=(\s_{d_{s+1}^{-2}}\e_{s+1},\s_{a_{s+1}^r}\e_{s+1})$;
    \item[(ii)] $(\lambda_{s+1},\mu_{s+1})=(\s_{d_{s+1}^{-2}}\e_{s+1},\s_{a_{s+1}^r}\e_{s+1}\a_{s+1})$;
  \item[(iii)] $(\lambda_{s+1},\mu_{s+1})=(\s_{d_{s+1}^{-2-2^{k_{s+1}-1}}}\e_{s+1}\a_{s+1},\s_{a_{s+1}^r}\e_{s+1})$;
  \item[(iv)] $(\lambda_{s+1},\mu_{s+1})=(\s_{d_{s+1}^{-2-2^{k_{s+1}-1}}}\e_{s+1}\a_{s+1},\s_{a_{s+1}^r}\e_{s+1}\a_{s+1})$. 
  \end{itemize}
Note that $w^{2^{k_{s+1}-1}}=1$ for any $w\in \la a_{s+1}^2\ra$. By Eq~(\ref{s+1parts}), $(c_{s+1}^{-1}d_{s+1})^4=1$  for cases (i)-(iv). From $(\lambda_{s+1}\mu_{s+1})^2=1$ we have  $(a_{s+1}^r)^2=d_{s+1}^{-4}$ or $d_{s+1}^{-4}(a_{s+1}^r)^{{2^{k_{s+1}-1}}}$ for all cases (i)-(iv).  Note that $k_{s+1}\geq 3$.
If $(a_{s+1}^r)^2=d_{s+1}^{-4}(a_{s+1}^r)^{{2^{k_{s+1}-1}}}$ then $(a_{s+1}^r)^{{2^{k_{s+1}-1}}}=(d_{s+1}^{-4}(a_{s+1}^r)^{{2^{k_{s+1}-1}}})^{{2^{k_{s+1}-2}}}=1$, forcing $a_{s+1}^r\in \la a_{s+1}^2\ra$, and hence $(a_{s+1}^r)^{{2^{k_{s+1}-1}}}=1$. Thus, we always have $(a_{s+1}^r)^2=d_{s+1}^{-4}$. Since $\delta=\e_{s+1}$ or $\e_{s+1}\a_{s+1}$, we have $(c_{s+1}c_{s+1}^{-\d}a_{s+1}^{r}a^n)^2=(c_{s+1}d_{s+1}^{-1})^4=1$, and by Eq~(\ref{s+1squre}), $x_{s+1}^4=1$, contradicting that $o(x_{s+1})=2^{k_{s+1}}\geq 8$. 

Thus, $\lambda_{s+1}=1$. Since  $(\lambda_{s+1}\mu_{s+1})^2=1$, we have $\mu_{s+1}^2=1$. 
By Eq~(\ref{y}) and Observation~1, we have $\lambda_j=1$ and $H_{p_j}=\la R(a_j)\ra$ for all $1\leq j\leq s$, implying $\lambda=1$ and $y=R(db)$. By Eq~(\ref{x-s+1}) and Observation~2, we have 
$\mu_j=\s_{c_j^{-2}} \e_j\in \la \s_{a_j}\ra \e_j$ and $H_{p_j}=\la R(a_j)\ra$ for all $1\leq j\leq s$. This completes the proof of Claim~3. \qed

Recall that $k_{s+1}\geq 2$. By Observation~2, $\mu_{s+1}=\s_{a_{s+1}^r}\d$ and $\d^2=1$. By Claim~3, $y=R(db)$, and by Eq~(\ref{yx-s+1}), $yx_{s+1}=R(dc^{-1}a^n)\mu$ fixes $C_{2n}$ and $C_{2n}b$ setwise. By Claim~1, $\d=1$ or $\d=\a_{s+1}$ with $k_{s+1}\geq 3$. 

Assume $k_{s+1}=2$. Then $\d=1$, and $\mu_{s+1}^2=1$ implies $a_{s+1}^r=1$ or $a^n$. If $a_{s+1}^r=a^n$, then by Eq~(\ref{s+1squre}), $x_{s+1}^2=R(c_{s+1}c_{s+1}^{-\d}a_{s+1}^ra^n)=1$, contrary to that $o(x_{s+1})=2^{k_{s+1}}=4$. Thus, $a_{s+1}^r=1$ and hence $\mu_{s+1}=1$. Recall that $n\geq 4$ is even. Since $k_{s+1}=2$, we have $s\geq 1$ and $\mu_{s+1}=1=\e_{s+1}\a_{s+1}$. By Claim~3, $1\not=\mu\in \la \s_a\ra \e\a_{s+1}$. By Eq~(\ref{x-s+1}), $\mu\in\Aut(\Q_{4n},S)$, and by Lemma~\ref{lm1}~(3), $\Ga$ is non-normal, a contradiction.

Finally assume $k_{s+1}\geq 3$. Then $\d=1$ or $\a_{s+1}$. By Claim~3, $\mu_{s+1}^2=1$, that is, ${a_{s+1}^r}({a_{s+1}^r})^\d=1$. For $\d=1$, we have $a_{s+1}^r=1$ or $a^n$. For $\d=\a_{s+1}$, we have $(a_{s+1}^r)^2(a_{s+1}^r)^{2^{k_{s+1}-1}}=1$ and hence $(a_{s+1}^r)^4=1$, which implies $(a_{s+1}^r)^{2^{k_{s+1}-1}}=1$ as $k_{s+1}\geq 3$. Thus, we always have $a_{s+1}^r=1$ or $a^n$. 
By Eq~(\ref{s+1squre}), $x_{s+1}^4=R((c_{s+1}c_{s+1}^{-\d}a_{s+1}^ra^n)^2)=1$, contrary to that $o(x_{s+1})=2^{k_{s+1}}\geq 8$.  \qed


\medskip
\noindent{ \bf Case II:} $x$ fixes $C_{2n}$ and $C_{2n}b$.

In this case, $x_{s+1}$ fixes $C_{2n}$ and $C_{2n}b$, and $o(x_{s+1})=2^{k_{s+1}}$. By Claim~1, we have  
$$x_{s+1}=R(c)\s_{a_{s+1}^t}, \mbox{ for some } c\in \la a_{s+1}\ra \mbox{ and some integer } t.$$
Note that $[R(c),\s_{a_{s+1}^t}]=1$. Since $o(x_{s+1})=2^{k_{s+1}}$, the semiregularity of $\la x_{s+1}\ra$ implies that  $x_{s+1}^{2^{k_{s+1}-1}}=R(a^n)$ or $R(a^n)\s_{a^n}$. The latter cannot happen because $R(a^n)\s_{a^n}$ fixes $b$. Thus, $x_{s+1}^{2^{k_{s+1}-1}}=R(a^n)$, and hence $c^{2^{k_{s+1}-1}}=a^n$ and $(a_{s+1}^t)^{2^{k_{s+1}-1}}=1$, that is, $o(c)=2^{k_{s+1}}$ and $o(a_{s+1}^t)\not=2^{k_{s+1}}$. 

By Eq~(\ref{s^R}), $(\s_{a_{s+1}^t})^{R(db)}=R(a_{s+1}^t)\s_{a_{s+1}^t}$. Note that $c\in \la a_{s+1}\ra$ and $y$ is given in Eq~(\ref{y}). So we have  
$R(c^{-1})\s_{a_{s+1}^{-t}}=x_{s+1}^{-1}=x_{s+1}^y=
R(c)^{R(db)\lambda}(\s_{a_{s+1}^t})^{R(db)\lambda}=
R(c^{-1}a_{s+1}^t)^{\lambda_{s+1}}\s_{(a_{s+1}^t)^{\lambda_{s+1}}}$. Thus, 
\begin{equation}\label{fixingC2n}
(c^{-1}a_{s+1}^t)^{\lambda_{s+1}}=c^{-1} \mbox{ and }  (a_{s+1}^t)^{\lambda_{s+1}}=a_{s+1}^{-t}.
\end{equation}

\medskip
\noindent {\bf Claim~4:} $\lambda=1$, $y=R(db)$ and $H_{p_j}=\la R(a_j)\ra$ for all $1\leq j\leq s$.  

Suppose $\lambda_{s+1}\not=1$. By Eq~(\ref{y}) and Observation~1,  we have $k_{s+1}\geq 3$ and either $\lambda_{s+1}=\s_{d_{s+1}^{-2}}\e_{s+1}$ or  $\lambda_{s+1}=\s_{d_{s+1}^{-2-2^{k_{s+1}-1}}}\e_{s+1}\a_{s+1}$. For every $1\leq j\leq s$, we have 
either $\lambda_j=1$ with $H_{p_j}=\la R(a_j)\ra$, or $\lambda_j=\s_{d_j^{-2}}\e_j\in \la \s_{a_j}\ra \e_j$ with $H_{p_j}=\la R(a_j) \s_{a_j}^2\ra$.  
Set $\pi=\{p_k\ |\ \lambda_k\not=1, 1\leq k\leq s\}$ and $a_{\pi}=\prod_{p_k\in \pi}a_k$. Then $\s_{a_\pi}\in \Aut(\Q_{4n},S)$.

Since $\lambda_{s+1}=\s_{d_{s+1}^{-2}}\e_{s+1}$ or  $\s_{d_{s+1}^{-2-2^{k_{s+1}-1}}}\e_{s+1}\a_{s+1}$, Eq~(\ref{fixingC2n}) implies that $a_{s+1}^t=c^2$ or $c^2a^n$. Since $k_{s+1}\geq 3$, we have $o(a_{s+1}^t)=2^{k_{s+1}-1}$, that is, $\la a_{s+1}^t\ra=\la a_{s+1}^2\ra$. Since $x_{s+1}=R(c)\s_{a_{s+1}^t}$, we have $\s_{a_{s+1}^t}\in \Aut(\Q_{4n},S)$, implying $\s_{a_{s+1}^2}\in \Aut(\Q_{4n},S)$, and hence $\s_{a_\pi a_{s+1}^2}\in \Aut(\Q_{4n},S)$. It is easy to see that $\lambda_{s+1}$ fixes every coset of $\la a_{s+1}^2\ra$ in $C_{2n}$, and so $\lambda$ fixes every coset of $\la a_\pi a_{s+1}^2\ra$ in $C_{2n}$. By Lemma~\ref{lm1}~(2), $\Ga$ is non-normal, a contradiction. 

Thus, $\lambda_{s+1}=1$, and by Observation~1, $H_{p_j}=\la R(a_j)\ra$ and $\lambda_j=1$  for all $1\leq j\leq s$. It follows that $\lambda=1$ and $y=R(db)$, as required. \qed   

By Claim~4, $\lambda_{s+1}=1$, and by Eq~(\ref{fixingC2n}), $a_{s+1}^t=1$, that is, $x_{s+1}=R(c)$. It follows that $H=\la R(a_1),R(a_2),\cdots, R(a_s),x_{s+1},y\ra=R(\Q_{4n})$, as required. This completes the proof.
\qed

\section{Proof of Theorem~\ref{mainth3}}

In this section, we prove Theorem~\ref{mainth3}, and use the notions and formulae in Eqs~(\ref{factorizationof2n})-(\ref{s_ai}) of Section~\ref{S3}. We first construct an infinite family of NNN Cayley digraphs of $\Q_{4n}$.

\begin{lemma}\rm\label{NNND}
  Let $n\ge 6$ be even. Then the Cayley digraph $\Ga=\Cay(\Q_{4n},S)$ is an NNN digraph with $S=\{a,a^{n-1},b,a^{n/2}b,a^nb,a^{3n/2}b\}$.
\end{lemma}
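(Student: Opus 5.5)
The plan has two parts. First, show that $\Ga$ is normal, i.e. $\Aut(\Ga)_1=\Aut(\Q_{4n},S)$ by Proposition~\ref{normal}. Second, exhibit a regular subgroup $H\cong\Q_{4n}$ of $A=\Aut(\Ga)=R(\Q_{4n})\rtimes\Aut(\Q_{4n},S)$ with $H\neq R(\Q_{4n})$, and check that $H$ is not normal in $A$.

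\emph{Computing $\Aut(\Q_{4n},S)$.} Put $K=\la a^{n/2}\ra\cong \mz_4$. Then $S=\{a,a^{n-1}\}\cup Kb$, and $S\cap C_{2n}=\{a,a^{n-1}\}$. Since $n$ is even, $\gcd(n-1,2n)=1$ and $(n-1)^2\equiv 1 \pmod{2n}$. Hence $\th:a\mapsto a^{n-1},b\mapsto b$ is an involutory automorphism swapping $a$ and $a^{n-1}$. Using Eq~(\ref{autoq_4n}), an automorphism $\s_x\vartheta$ with $\vartheta\in\Aut(C_{2n})$ fixes $S$ iff $\{a,a^{n-1}\}^{\vartheta}=\{a,a^{n-1}\}$ and $x\in K$. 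So I expect $\Aut(\Q_{4n},S)=\la\s_{a^{n/2}}\ra\rtimes\la\th\ra$, of order $8$ (note $\s_{a^{n/2}}^\th=\s_{a^{n(n-1)/2}}$).

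\emph{Normality.} The aim is to show $|\Aut(\Ga)_1|\le 8$. I would first recover the partition $\{C_{2n},C_{2n}b\}$, and the $K$-cosets in $C_{2n}b$, from combinatorial data of $\Ga$.

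The relevant data are out-neighbourhoods. Since $Kb\cdot kb=K$, every $g$ has out-neighbourhood $\{ag,a^{n-1}g\}\cup Kbg$. Any two vertices of the same coset $Kbg$ have identical "$Kb$-parts" $Kg$. I would show that the pairs of vertices whose out-neighbourhoods share $4$ common vertices are exactly those lying in a common right $K$-coset; this is where $n\ge 6$ is needed, because for $n=4$ extra coincidences appear, e.g.\ $a^{n-1}=a^{3}$ with $a^{n/2}=a^2$. This gives a block system $\mathcal B$ of right $K$-cosets, which $\Aut(\Ga)$ preserves.

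Next I would separate the two kinds of arcs. The arcs $g\to ag$ and $g\to a^{n-1}g$ go between distinct blocks singly, whereas the $Kb$-arcs form complete $\vec K_{4,4}$'s between blocks. Hence $\Aut(\Ga)$ also preserves the circulant subdigraph $\Cay(\Q_{4n},\{a,a^{n-1}\})$. This circulant is a disjoint union of two copies of $\Cay(C_{2n},\{a,a^{n-1}\})$, namely on $C_{2n}$ and on $C_{2n}b$.

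A vertex stabiliser of that circulant fixing $1$ has order at most $2$ on the component of $1$; the only possibility is the reflection induced by $\th$. The $\vec K_{4,4}$ structure then forces what happens on the other component up to the $4$ choices given by $\s_x$, $x\in K$. This yields $|\Aut(\Ga)_1|\le 8=|\Aut(\Q_{4n},S)|$. I expect making this recognition of blocks rigorous for all even $n\ge 6$ to be the main obstacle; I would handle small $n$ (e.g.\ $n=6,8$) by direct inspection if the general counting argument has exceptions.

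\emph{The non-normal regular subgroup.} I will search inside $R(\Q_{4n})\rtimes\la\s_{a^{n/2}}\ra$ and use Eq~(\ref{s^R}) to compute. Natural candidates are
\[
X=R(a)\quad\text{or}\quad X=R(a)\s_{a^{i}},\qquad Y=R(b)\s_{a^{n/2}}\quad\text{or}\quad Y=R(a^jb)\s_{a^{n/2}},
\]
with $i,j$ chosen so that the defining relations hold. Thus I will adjust the parameters so that $X^{2n}=1$, $Y^2=X^n$ and $X^Y=X^{-1}$. For example, $Y^2=R(b\,b^{\s_{a^{n/2}}^{-1}})\s_{a^n}=R(a^{3n/2})\s_{a^n}$, so the parameters must be tuned to cancel the $\s_{a^n}$ factor.

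Regularity then reduces to semiregularity of $H=\la X,Y\ra$, since $|H|=4n$. Semiregularity is checked as in Claim~1: it suffices that no nontrivial element of the form $R(g)\g$ with $\g\neq 1$ fixes a vertex. Finally, $H\neq R(\Q_{4n})$ because it contains an element with nontrivial $\s$-component. To see that $H$ is not normal in $A$, I will conjugate $H$ by $\th$ (or by $R(b)$) and check, using Eqs~(\ref{add}) and (\ref{s^R}), that the image contains an element not in $H$.
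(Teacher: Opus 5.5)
\textbf{The gap.} Your normality half is sound in outline, but the second half, which is the heart of the lemma, cannot work. The group $P=R(\Q_{4n})\rtimes\la\s_{a^{n/2}}\ra$ in which you search contains no regular subgroup isomorphic to $\Q_{4n}$ other than $R(\Q_{4n})$, so no choice of $i,j$ will succeed.

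To see this, let $N=R(C_{2n})\times\la\s_{a^{n/2}}\ra$. It is abelian, since each $\s_c$ fixes $a$ and so commutes with $R(C_{2n})$. It is exactly the index-$2$ subgroup of $P$ fixing $C_{2n}$ and $C_{2n}b$ setwise.

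Suppose $H\le P$ is regular with $H\cong\Q_{4n}$. Then $H\cap N$ is an abelian subgroup of index $2$ in $H$. For even $n\ge 6$ the other two index-$2$ subgroups of $\Q_{4n}$ are copies of the non-abelian $\Q_{2n}$, so $H\cap N$ is the cyclic subgroup of order $2n$. It is regular on $C_{2n}$, where every $\s_c$ acts trivially. Choosing the generator suitably, $H\cap N=\la X\ra$ with $X=R(a)\s_c$ and $c\in K$.

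Semiregularity forces $X^n=R(a^n)$; otherwise $X^n=R(a^n)\s_{a^n}$, a non-identity element fixing $b$. Every $Y\in H\setminus N$ has the form $R(a^kb)\s_d$ with $d\in K$, and Eq~(\ref{s^R}) gives
\[
X^{Y}=R(a^{-1}c)\s_c,\qquad X^{-1}=R(a^{-1})\s_{c^{-1}},\qquad Y^2=R(da^n)\s_{d^2}.
\]
So $X^Y=X^{-1}$ forces $c=1$, then $Y^2=X^n$ forces $d=1$, and $H=\la R(a),R(a^kb)\ra=R(\Q_{4n})$. Your own computation $Y^2=R(a^{3n/2})\s_{a^n}$ already shows the obstruction: the $R$-part of $X^n$ is always $R(a^n)$.

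\textbf{The missing idea.} The new regular subgroup must use the involution $\th$ (the paper's $\a$). The paper takes $H=\la R(ab)\th,R(b)\ra$.
\begin{itemize}
\item $(R(ab)\th)^2=R(a^2)$, so $R(ab)\th$ has order $2n$ and its cyclic subgroup interchanges $C_{2n}$ and $C_{2n}b$. By the argument above, this can never happen inside $P$.
\item $R(b)$ inverts $R(ab)\th$, so $H\cong\Q_{4n}$.
\item $H$ is regular, because it permutes the four cosets of $\la a^2\ra$ transitively and contains $R(a^2)$.
\end{itemize}

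Your proposed non-normality test would also fail for this $H$: $R(b)\in H$, and $H^{\th}=H$ because $(R(ab)\th)^{\th}=R(a^{n-2})R(ab)\th$. The paper conjugates by $\b=\s_{a^{n/2}}$ instead. From $R(b)^{\b}=R(a^{n/2}b)\in H$ one gets $R(a^{3n/2})\in H$.
\begin{itemize}
\item If $n/2$ is odd, this gives $R(a)\in H$, hence $H=R(\Q_{4n})$, which is absurd.
\item If $n/2$ is even, one derives $\th\th^{\b}=\b^2=\s_{a^n}\in H$, contradicting regularity.
\end{itemize}

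\textbf{The normality half.} This matches the paper's argument and can be done without your block recognition. The arcs from $\{a,a^{n-1}\}$ are exactly the non-reciprocated arcs, since $Kb=(Kb)^{-1}$. Hence $\Aut(\Ga)$ preserves $\Cay(\Q_{4n},\{a,a^{n-1}\})$ directly. The one place $n\ge 6$ is needed is that the reflection of $C_{2n}b$ about $b$ does not preserve the $K$-cosets, because $a^{n-2}\notin K$. Isolate this fact explicitly.
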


\proof  Since $n$ is even, $(n-1,2n)=1$ and hence $o(a^{n-1})=2n$. This implies $$\a:\ a\mapsto a^{n-1}, b\mapsto b$$
induces an automorphism of $\Q_{4n}$. Clearly, $o(\a)=2$ and $\a\in \Aut(\Q_{4n},S)$. 

Let $\b=\s_{a^{n/2}}$ be given in Eq~(\ref{s_ai}), an automorphism of $\Q_{4n}$ induced by 
$$\b:\ a\mapsto a, b\mapsto a^{n/2}b.$$
Clearly, $\b$ cyclically permutes the four elements in $\{b,a^{n/2}b,a^nb,a^{3n/2}b\}$. Thus, $o(\b)=4$ and $\b\in\Aut(\Q_{4n},S)$. It is easy to check that if $n/2$ is even then  $\a\b\a=\b^{-1}$ and  $\la\a,\b\ra\cong D_8$, and if $n/2$ is odd then $\a\b=\b\a$ and $\la\a,\b\ra\cong \mz_4\times \mz_2$. 

\medskip
\noindent {\bf Claim}: $\Ga$ is normal and $\Aut(\Ga)=R(\Q_{4n})\rtimes \la\a,\b\ra$. 

Write $A=\Aut(\Ga)$. Note that $\la\a,\b\ra\leq \Aut(\Q_{4n},S)\leq A_1$ and $|\la\a,\b\ra|=8$. By Proposition~\ref{normal}, to finish the proof of the claim it suffices to show that $|A_1|=8$.  

Let  $S=S_1\cup S_2$ with $S_1=\{a,a^{n-1}\}$ and $S_2=\{b,a^{n/2}b,a^nb,a^{3n/2}b\}$. Note that $S_1\cap S_1^{-1}=\emptyset$ and $S_2=S_2^{-1}$. Set $\Ga_1=\Cay(\Q_{4n},S_1)$ and $\Ga_2=\Cay(\Q_{4n},S_2)$, and write $B=\Aut(\Ga_1)$ and $C=\Aut(\Ga_2)$. Then $\Ga=\Ga_1\cup \Ga_2$. An arc $(x,y)$ in $\Ga$ is called {\em an edge} if $(y,x)$ is also an arc in $\Ga$, and an arc $(x,y)$ in $\Ga$ is called a {\em directed edge} if $(y,x)$ is not an arc in $\Ga$. It is easy to see that $A$ fixes the set of edges of $\Ga$ setwise, and the set of directed edges of $\Ga$ setwise. Clearly, all directed edges of $\Ga$ are the same as the directed edges in $\Ga_1$, and all edges of $\Ga$ are the same as the edges in $\Ga_2$. It follows that $A\leq \Aut(\Ga_1)$ and $A\leq \Aut(\Ga_2)$, that is, $A\leq B\cap C$. For a subset $T$ of $\Q_{4n}$, denote by $A_{(T)}$ the subgroup of $A$ fixing $T$ pointwise in $\Ga$, and for $g\in \Q_{4n}$, denote by $A_g^*$ the subgroup of $A_g$ fixing the out-neighbourhood of $g$ in $\Ga$ pointwise. Similarly, we can define $B_{(T)}$ and $B_g^*$ for $\Ga_1$, and $C_{(T)}$ and $C_g^*$ for $\Ga_2$. 

Note that $\Ga_1$ has two connected components, that is, the induced subgraphs  $[C_{2n}]_{\Ga_1}$ and $[C_{2n}b]_{\Ga_1}$. Clearly, $[C_{2n}]_{\Ga_1}=\Cay(C_{2n},S_1)$ and $R(C_{2n})$ is transitive on both $[C_{2n}]_{\Ga_1}$ and $[C_{2n}b]_{\Ga_1}$. For $g\in \Q_{4n}$, denote by $\Ga_1(g^*)$ the set of out-neighbourhoods $g$ in $\Ga_1$. It is easy to see that $\Ga_1(1^*)=\{a,a^{n-1}\}$, and $a$ and $a^{n-1}$ have exactly one common out-neighbour $a^n$. This implies that $B_1^*$ fixes $\{a^2,a^n,a^{-2}\}$ pointwise, which is the $2$-distance out-neighbourhood of $1$.  It follows that    
$B_1^*=B_g^*$ for any $g\in \Ga_1(1^*)$, and by conjugating this equation by elements in $R(C_{2n})$, the connectedness of $[C_{2n}]_{\Ga_1}$ implies that $B_1^*$ fixes $C_{2n}$ pointwise. By conjugate of $R(b)$ on $B_1^*$, we have that $B_b^*$ fixes $C_{2n}b$ pointwise. Since $\Ga_1$ has out-valency $2$, $B_{(1,a)}=B_1^*$, and $|B_1|=2|B_1^*|$ as $\a\in B_1$. Note that $\a\in B_1$ has non-trivial action on both $C_{2n}$ and $C_{2n}b$. Since $B_1^*$ fixes $C_{2n}$ pointwise, the restriction $B_1^{C_{2n}}$ of $B_1$ on $C_{2n}$ is the restriction $\la\a\ra^{C_{2n}}$, that is, $B_1^{C_{2n}}=\la\a\ra^{C_{2n}}$, because $|B_1^{C_{2n}}|=2|B_{(1,a)}^{C_{2n}}|=2|(B_1^*)^{C_{2n}}|=2$. Similarly, $B_b^{C_{2n}b}=\la\a\ra^{C_{2n}b}$.
Since $[C_{2n}]_{\Ga_1}$ and $[C_{2n}b]_{\Ga_1}$ are components of $\Ga_1$, $C_{2n}$ and $C_{2n}b$ form a complete block system of $B$ and $A$. It follows that $B_1^{C_{2n}}=\la\a\ra^{C_{2n}}\leq \Aut([C_{2n}]_{\Ga_1})_1$ and $B_b^{C_{2n}b}=\la\a\ra^{C_{2n}b}\leq \Aut([C_{2n}b]_{\Ga_1})_b$.

Clearly, $A_1$ has two orbits on $S$, that is, $S_1$ and $S_2$. Note that $\la \a\ra$ is transitive on $S_1$ and $\la \b\ra$ is transitive on $S_2$. Since $|S_1|=2$ and $|S_2|=4$, we have $|A_1|=8|A_{(1,a,b)}|$. To finish the proof of the claim, we only need to show that $A_{(1,a,b)}=1$.   

Since $B_{(1,a)}=B_1^*$, we have $A_{(1,a,b)}\leq B_{(1,a,b)}\leq B_1^*$, and hence $A_{(1,a,b)}$ fixes $C_{2n}$ pointwise. Let $M=\la a^{n/2}\ra$. Then $M$ is a subgroup of $C_{2n}$ of order $4$, and so normal in $\Q_{4n}$. Since $S_2=\{b,a^{n/2}b,a^nb,a^{3n/2}b\}=Mb$, $\Ga_2$ is the union of $n/2$ copies of $K_{4,4}$, that is, $\Ga_2=\cup_{i=0}^{n/2-1}[Ma^i\cup Mba^i]_{\Ga_2}$, where $[Ma^i\cup Mba^i]_{\Ga_2}\cong K_{4,4}$ with partite sets $Ma^i$ and $Mba^i$. Since $A_{(1,a,b)}$ fixes $C_{2n}$ pointwise, it fixes the set $[C_{2n}b:M]$ of right cosets of $M$ in $C_{2n}b$ pointwise. In particular,  $A_{(1,a,b)}$ fixes $Mba^{1-n}$ and $Mba^{-1}$. Note that $o(a^{n-2})=2n/(n-2,2n)=n\geq 6$. Then $a^{n-2}\not\in M$, and hence $Mba^{1-n}\not=Mba^{-1}$. On the other hand, $A_{(1,a,b)}\leq B_b$ and hence $A_{(1,a,b)}$ fixes $\{ab,a^{n-1}b\}$ setwise, because it is the set of the out-neighbors of $b$ in $\Ga_1$. Thus, $A_{(1,a,b)}$ fixes $Mba^{-1}\cap \{ab,a^{n-1}b\}=\{ab\}$, and similarly $a^{n-1}b$. It follows that $A_{(1,a,b)}\leq B_b^*$, implying that $A_{(1,a,b)}$ fixes $C_{2n}b$ pointwise. Thus,  $A_{(1,a,b)}=1$. This completes the proof of Claim. \qed    

By Claim, $\Ga$ is normal. Write
$$H=\la R(ab)\a,R(b) \ra.$$
To finish the proof, it suffices to show that $H$ is regular, $H\cong \Q_{4n}$, and $H\ntrianglelefteq A$.  

\medskip
Since $(R(ab))^{\a}=R((ab)^{\a})=R(a^{n-1}b)$, we have 
\begin{align*}
  (R(ab)\a)^2=R(ab)R((ab)^{\a})=R(aba^{n-1}b)=R(a^2).
\end{align*}
Since $n$ is even, we have that $o(R(ab)\a)$ is even. It follows that $$n=o(R(a^2))=o(R(ab)\a)/(o(R(ab)\a),2)=o(R(ab)\a)/2,$$ that is, $o(R(ab)\a)=2n$.
Moreover,
\begin{align*}
  (R(ab)\a)^{R(b)}=R(b^{-1})R(ab)\a R(b)=R(b^{-1}ab^2)\a=\a R(ba^{n-1})=(R(ab)\a)^{-1}.
\end{align*}
Thus, $\la R(ab)\a,R(b)\ra \cong R(\Q_{4n})$.

Since $\Ga$ is normal, $R(a^2)$ is normal in $A$, and hence its orbit set $[\Q_{4n},\la a^2\ra]$ is a complete block system of $A$. Clearly, $R(b)$ interchanges $\la a^2\ra$ and $\la a^2\ra b$, and $\la a^2\ra a$ and $\la a^2\ra ab$. It is easy to see that $R(ab)\a$ interchanges $\la a^2\ra$ and $\la a^2\ra ab$. Thus, $H$ is transitive on  $[\Q_{4n},\la a^2\ra]$. Note that $(R(ab)\a)^2=R(a^2)$ and $\la R(a^2)\ra$ is transitive on every coset of $\la a^2\ra$ in $\Q_{4n}$. It follows that $H$ is transitive on $\Q_{4n}$. Since $|H|=4n$, $H$ is regular on $\Q_{4n}$. 

Suppose $H$ is normal in $A$. Then $H^\b=H$. Recall that $\b=\s_{a^{n/2}}$. Then $R(b)^\b=R(a^{n/2}b)\in H$, and $R(a^{n/2}b)R(b)=R(a^{3n/2})\in H$. 
 
Let $n/2$ be odd. Since $(R(ab)\a)^2=R(a^2)\in H$ and $(2,3n/2)=1$, we have $R(a)\in H$ as $R(a^{3n/2})\in H$. Since $R(b)\in H$, we have $H=R(\Q_{4n})$, which is impossible. 

Let $n/2$ be even. Recall that $\a\b\a=\b^{-1}$. Since $R(b)^{-1}R(ab)\a=R(a^{-1})\a\in H$, we have $R(a)\a\in H$. Then $(R(a)\a)^\b=R(a)\a^\b\in H$. Furthermore, $(R(a)\a)^{-1}R(a)\a^\b=\a\a^\b\in H$. It follows that $\a\a^\b=\b^2\in H$, contradicting the regularity of $H$. 

Thus, $H\ntrianglelefteq A$, as required. This completes the proof of Lemma~\ref{NNND}. \qed

\medskip

\noindent{\bf Proof of Theorem~\ref{mainth3}:} Let $\Q_{4n}$ be the generalized quaternion group of order $4n$ with $n\geq 2$. Let $n\ge 6$ be even. By Lemma~\ref{NNND}, $\Q_{4n}$ is an NNND-group. On the other hand, let $\Q_{4n}$ be an NNND-group. If $n=2$ or $n$ is odd, by Proposition~\ref{ndci} we have that $\Q_{4n}$ is an NDCI-group, implying that $\Q_{4n}$ is not an NNND-group. Thus, $n\not=2$ and $n$ is even, that is, $n\geq 4$ is even. For $n=4$, using Magma~\cite{Magma} we may check that $\Q_{16}$ is not an NNND-group. Thus, $n\geq 6$ is even. \qed

\end{document}